\newtheorem{thm}{Theorem}[section]
\newtheorem{claim}[thm]{Claim}
\newtheorem{con}[thm]{Conjecture}
\newtheorem{cor}[thm]{Corollary}
\newtheorem{lem}[thm]{Lemma}
\newtheorem{prop}[thm]{Proposition}
\numberwithin{equation}{section}
\newcommand{\bC}{{\mathbb{C}}}
\newcommand{\bQ}{{\mathbb{Q}}}
\newcommand{\bR}{{\mathbb{R}}}
\newcommand{\bZ}{{\mathbb{Z}}}
\newcommand{\bK}{{\mathbb{K}}}
  \newcommand{\C}{{\mathcal{C}}}
  \newcommand{\E}{{\mathcal{E}}}
  \newcommand{\M}{{\mathcal{M}}}
  \newcommand{\N}{{\mathcal{N}}}
\renewcommand{\O}{{\mathcal{O}}}
\renewcommand{\P}{{\mathcal{P}}}
\renewcommand{\S}{{\mathcal{S}}}
  \newcommand{\T}{{\mathcal{T}}}
  \newcommand{\Y}{{\mathcal{Y}}}
\newcommand{\rank}{\operatorname{rank}}
\newcommand{\fract}{\mathrm{Fract}}
\newcommand{\td}{\mathrm{td}\,}
\begin{document}

\title[Global Rigidity on Surfaces]{Necessary Conditions for the Generic Global Rigidity of Frameworks on Surfaces}
\author[B. Jackson]{B. Jackson}
\address{School of Mathematical Sciences\\ Queen Mary, University of London\\
E1 4NS \\ U.K. }
\email{b.jackson@qmul.ac.uk}
\author[T. A. McCourt]{T. A. McCourt}
\address{Heilbronn Institute for Mathematical Research\\ School of Mathematics\\ University of Bristol\\ Bristol\\
BS8 1TW \\ U.K. }
\email{tom.mccourt@bristol.ac.uk}
\author[A. Nixon]{A. Nixon}
\address{Heilbronn Institute for Mathematical Research\\ School of Mathematics\\ University of Bristol\\ Bristol\\
BS8 1TW \\ U.K. }
\email{tony.nixon@bristol.ac.uk}

\thanks{2010 {\it  Mathematics Subject Classification.}
52C25, 05C10, 53A05 \\
Key words and phrases: rigidity, global rigidity, framework on a surface}

\begin{abstract}
A result due in its various parts to Hendrickson, Connelly, and Jackson and Jord\'an, provides a purely combinatorial characterisation of global rigidity for generic bar-joint frameworks in ${\bR}^2$. The analogous conditions are known to be insufficient to characterise generic global rigidity in higher dimensions.
Recently Laman-type characterisations of rigidity have been obtained for generic frameworks in $\bR^3$ when the vertices are constrained to lie on various surfaces, such as the cylinder and the cone.
In this paper we obtain analogues of Hendrickson's necessary conditions for the global rigidity of generic frameworks on the cylinder, cone and ellipsoid.
\end{abstract}

\maketitle

\section{Introduction}

A bar-joint framework in Euclidean space $\bR^d$ is a geometric realisation of the vertices of a graph with the edges considered as (fixed length) bars between them.
Such a framework is said to be \emph{rigid} if there is no non-trivial continuous motion of the vertices in $\bR^d$ which maintains bar-lengths, and is said to be flexible if it is not rigid. It is \emph{redundantly rigid} if it remains rigid after deleting any single edge. A foundational theorem of Laman \cite{Lam}, obtained in 1970,  asserts that the rigidity of a generically positioned framework in $\bR^2$ depends only on the underlying graph and, furthermore, that these graphs are characterised in terms of a simple counting condition.
Finding an analogous characterisation for the rigidity of generic frameworks in $\bR^3$ is an important open problem.

Formally a framework $(G,p)$ in $\bR^d$ is the combination of a finite graph $G=(V,E)$ and a map $p:V\rightarrow \bR^d$.
Two frameworks $(G,p)$ and $(G,q)$ are said to be \emph{equivalent} if $\|p(v)-p(u)\|=\|q(v)-q(u)\|$ for all pairs of adjacent vertices $u,v\in V$. More strongly they are said to be \emph{congruent} if $\|p(v)-p(u)\|=\|q(v)-q(u)\|$ holds for all pairs of vertices $u,v\in V$. A framework $(G,p)$ is \emph{globally rigid} if every framework $(G,q)$ equivalent to $(G,p)$ is also congruent to $(G,p)$.

Hendrickson derived the following necessary conditions for generic global rigidity in $\bR^d$.\footnote{More precisely, Hendrickson proved the weaker result that {\em almost all} globally rigid frameworks in $\bR^d$ are redundantly rigid. His proof technique can be extended to cover all generic frameworks and this extension has been taken to be implicit in the literature.}

\begin{thm}[\cite{Hdk}]\label{thm:hennecessary}
Let $(G,p)$ be a generic globally rigid framework in $\bR^d$. Then $G$ is a complete graph on at most $d+1$ vertices or $G$ is $(d+1)$-connected and $(G,p)$ is redundantly rigid in $\bR^d$.
\end{thm}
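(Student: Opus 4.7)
The plan is to prove the two necessary conditions separately, in each case arguing by contradiction: assuming the conclusion fails, construct from $(G,p)$ a framework $(G,p')$ that is equivalent but not congruent to it. For connectivity, when $|V(G)|\le d+1$ a standard dimension count rules out any proper subgraph of $K_{|V|}$ from being generically rigid in $\bR^d$, so global rigidity forces $G=K_{|V|}$. Otherwise, suppose $|V|\ge d+2$ and, for contradiction, that $S\subseteq V$ is a vertex cut with $|S|\le d$, giving $V\setminus S=V_1\sqcup V_2$ with $V_1,V_2\ne\emptyset$ and no edges between them. The $|S|\le d$ generic points $p(S)$ span an affine subspace of dimension at most $d-1$ and hence lie in some affine hyperplane $H\subset\bR^d$. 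Let $p'$ agree with $p$ on $V_1\cup S$ and equal $\sigma_H\circ p$ on $V_2$, where $\sigma_H$ is reflection in $H$. Every edge has both endpoints in $V_1\cup S$ or both in $V_2\cup S$; since $\sigma_H$ is an isometry fixing $p(S)\subset H$ pointwise, all edge-lengths are preserved and $(G,p')$ is equivalent to $(G,p)$. A careful choice of $H$ — which is free when $|S|<d$ — together with the genericity of $p(V_2)$ rules out any Euclidean isometry carrying $p$ to $p'$, giving the contradiction.

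For redundant rigidity, suppose some edge $e=uv$ has $(G-e,p)$ not generically rigid. Since $(G,p)$ is rigid (being globally rigid), $e$ is independent of $E\setminus\{e\}$ in the generic rigidity matroid, and so $\ell_e^2(q)=\|q(u)-q(v)\|^2$ is a non-constant polynomial on the component $V_p$ of $F_{G-e}^{-1}(F_{G-e}(p))$ through $p$, where $F_{G-e}$ denotes the edge-length measurement map of $G-e$. Choose a real-analytic flex $p(t)\subset V_p$ with $p(0)=p$ and $p(t)\not\in\mathrm{SE}(d)\cdot p$ for small $t\ne 0$. If $\ell_e^2$ were constant along this flex, then each $p(t)$ would be an equivalent non-congruent $G$-framework, contradicting global rigidity outright. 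Otherwise $\ell_e^2$ restricted to the flex is a non-constant real-analytic function, and I would close the argument with a fibre-dimension / branched-cover analysis of $F_{G-e}$: by genericity of $p$, the value $\ell_e^2(p)$ is a regular value of $\ell_e^2|_{V_p}$, so its preimage is a subvariety of pure dimension $\dim V_p-1\ge\binom{d+1}{2}$ strictly containing the single orbit $\mathrm{SE}(d)\cdot p$, and any representative of an additional orbit delivers the required $(G,p')$.

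The reflection step for connectivity is elementary. The real difficulty lies in the last step of the redundant-rigidity argument: the naive dimension count only ensures that the preimage of $\ell_e^2(p)$ has the right dimension, whereas global rigidity could a priori hold if this preimage consisted of the single $\mathrm{SE}(d)$-orbit of that dimension. Over $\bC$ a fibre-dimension theorem dispatches this at once; in the real setting one must use the genericity of $p$ to avoid critical values of $\ell_e^2$ on $V_p$, which is precisely the extension from ``almost all'' to all generic configurations flagged in the paper's footnote to Theorem~\ref{thm:hennecessary}.
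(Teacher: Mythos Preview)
The paper does not prove Theorem~\ref{thm:hennecessary} itself; it is quoted from \cite{Hdk}. What the paper does prove is the surface analogue, Proposition~\ref{prop:kcon} and Theorem~\ref{thm:redundant}, following Hendrickson's template. Measured against that template, your connectivity argument is correct and is exactly the reflection-through-a-hyperplane argument used in Proposition~\ref{prop:kcon}.

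Your redundant-rigidity argument, however, has a genuine gap --- one you explicitly flag in your last paragraph but do not close. After showing that $\ell_e^2(p)$ is a regular value of $\ell_e^2|_{V_p}$, you assert that the preimage ``strictly contain[s] the single orbit $\mathrm{SE}(d)\cdot p$''. But your own dimension count gives $\dim V_p-1=\binom{d+1}{2}=\dim\bigl(\mathrm{SE}(d)\cdot p\bigr)$, with equality, so there is no dimension-theoretic obstruction to the preimage being exactly that one orbit. Invoking genericity again to ``avoid critical values'' adds nothing: you have already used regularity, and regularity alone does not force a second orbit.

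The missing idea, both in Hendrickson's original proof and in the paper's Theorem~\ref{thm:redundant}, is topological rather than dimension-theoretic. One first factors out the continuous isometries (the paper does this by restricting to frameworks in standard position on $\N$). The configuration space $f_{[G-e]}^{-1}(f_{[G-e]}(p))$ is then a $1$-manifold, and a separate argument (Claim~\ref{c1} in the paper) shows it is compact, hence a finite union of circles. On the circle $\O$ through $p$, the squared edge-length $f_e$ is smooth and, by genericity, $p$ is not a critical point of $f_e|_{\O}$. Now the intermediate value theorem on a circle does the work your dimension count cannot: since $f_e$ increases through $p$ and must eventually return to $p$ along $\O$, it attains the value $f_e(p)$ at some second point $p_1\neq p$. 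That $p_1$ is the desired equivalent, non-congruent framework. (On surfaces there is the further complication that $p_1$ might be related to $p$ by a discrete isometry of $\M$; most of the proof of Theorem~\ref{thm:redundant} is devoted to excluding this.) Without the compactness-and-circle step, your sketch does not produce $p_1$.
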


In the case of 2-dimensional frameworks these conditions are also sufficient.

\begin{thm}[\cite{Hdk}, \cite{Con} and \cite{J&J}]\label{thm:globalplane}
Let $(G,p)$ be a generic framework in $\bR^2$. Then $(G,p)$ is globally rigid if and only if either $G$ is a complete graph on at most three vertices or $G$ is $3$-connected and $(G,p)$ is redundantly rigid.
\end{thm}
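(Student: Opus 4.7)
The plan is to prove the two directions of the biconditional separately. Necessity is immediate from Theorem \ref{thm:hennecessary} specialised to $d=2$: if $(G,p)$ is generic and globally rigid, then $G$ is either a complete graph on at most $3$ vertices or $3$-connected and redundantly rigid.

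For sufficiency I would assemble the result from three classical ingredients. First, a base case: verify directly that $K_4$ in generic position in $\bR^2$ is globally rigid, for instance by exhibiting an equilibrium stress whose associated stress matrix has the maximum possible rank $|V|-3$. Second, a combinatorial inductive structure due to Jackson and Jord\'an \cite{J&J}: every $3$-connected, redundantly rigid graph other than $K_4$ can be obtained from a smaller $3$-connected, redundantly rigid graph $G'$ by either (a) adding a single edge, or (b) a $1$-extension, which deletes an edge $uv$ of $G'$ and adds a new vertex $w$ adjacent to $u$, $v$ and one further vertex of $G'$. Third, Connelly's sufficient condition \cite{Con}: if a generic framework $(H,q)$ in $\bR^d$ admits an equilibrium stress whose stress matrix has rank $|V(H)|-d-1$, then $(H,q)$ is globally rigid.

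With these ingredients in place I would proceed by induction on $|V(G)|$, the base case being $K_4$. For the induction step, assume the generic realisations of $G'$ admit an equilibrium stress matrix of maximum rank. If $G$ is obtained from $G'$ by an edge addition, then extending the stress by $0$ on the new edge and perturbing generically within the (now larger) equilibrium stress space preserves the maximum rank, and Connelly's theorem concludes global rigidity. If $G$ is obtained from $G'$ by a $1$-extension along $uv$, place the new vertex $w$ on the line through $u$ and $v$ so that the restriction to $V(G')$ is a generic realisation of $G'$; the maximum-rank stress of $G'$ then lifts to a stress of $G$ because collinearity of $u,v,w$ allows the forces at $u$ and $v$ to be rerouted through $w$. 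A continuity/Zariski-density argument then shows that the lifted stress deforms to a maximum-rank stress at a generic configuration in which $w$ is off the line through $u$ and $v$.

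I expect the main obstacle to be this last step of the $1$-extension argument: the naive lift of the stress from $G'$ is only defined on the proper subvariety of collinear configurations, and showing that maximum rank of the stress matrix persists when $w$ is perturbed off this subvariety is the technical heart of Connelly's proof and requires careful tracking of the equilibrium stress space under continuous deformation. A secondary obstacle is the Jackson--Jord\'an inductive construction itself, which is purely combinatorial but whose proof requires a delicate ear-decomposition analysis inside the $(2,3)$-sparsity matroid that governs $2$-dimensional generic rigidity.
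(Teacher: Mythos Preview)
The paper does not supply its own proof of this theorem; it states it as a known result and remarks only that ``Sufficiency follows by combining a geometric result due to Connelly and a combinatorial construction of Jackson and Jord\'an.'' Your outline is precisely that standard argument (Hendrickson for necessity, Connelly's stress-matrix criterion plus the Jackson--Jord\'an inductive construction for sufficiency), so it matches the paper's indicated approach.
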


\noindent Sufficiency follows by combining a geometric result due to Connelly and a combinatorial construction of Jackson and Jord\'an.

Both rigidity and global rigidity have far reaching applications. In particular, determining when a framework has a unique realisation up to congruence has applications in robotics \cite{Robotics} and in sensor networks \cite{SensorNetworks}.

Attention has recently been given to frameworks in $\bR^3$ whose vertices are constrained to lie on $2$-dimensional surfaces and analogues of Laman's theorem have been obtained for a variety of surfaces \cite{NOP}, \cite{NOP2}. In this paper we consider the global rigidity of frameworks on these surfaces and obtain analogues of Hendrickson's necessary conditions for generic global rigidity. In the case when the surface is a sphere, Connelly and Whiteley \cite{C&W} proved that a generic framework $(G,p)$ on the sphere is globally rigid if and only if a corresponding generic framework $(G,q)$ is globally rigid in the plane. We focus our attention on cylinders, cones and ellipsoids. We also include the sphere as it is covered by our proof technique and provides a complete proof that redundant rigidity is a necessary condition for generic global rigidity on the sphere, and hence also in the plane.

We conclude the introduction by giving a brief outline of the proof of our main result, that redundant rigidity is a necessary condition for generic global rigidity. We adopt a similar approach to that of \cite{Hdk}. We consider the motion of the framework that results from deleting a non-redundant edge  $e$
from a generic rigid framework $(G,p_0)$ on a surface $\M$.  We obtain a series of geometric results in Sections \ref{sec:alggeom}-\ref{sec:regular} that enable us to
 show in Section \ref{sec:global} that this motion is diffeomorphic to a circle. We then use genericity to prove that the motion reaches a framework $(G-e,p_1)$ such that $(G,p_1)$ is equivalent but not congruent to $(G,p_0)$. We have to overcome a slight technical difficulty for surfaces which does not arise in \cite{Hdk}. We actually show in
 Section \ref{sec:global} that there is no isometry of $\M$ which maps $(G,p_0)$ onto $(G,p_1)$. We prove that this apparently weaker conclusion implies that  
 $(G,p_0)$ and $(G,p_1)$ are not congruent (as long as $G$ has enough vertices) in Section \ref{sec:unique}.

\section{Frameworks on Surfaces}

Let $\M$ be a fixed surface in $\bR^3$.
A framework $(G,p)$ on $\M$ is the combination of a finite graph $G=(V,E)$ and a map $p:V\rightarrow \M$; such a framework is said to be \emph{rigid on $\M$} 
if every continuous motion of the vertices on $\M$ that preserves equivalence also preserves congruence; otherwise $(G,p)$ is said to be \emph{flexible on $\M$}.
Moreover $(G,p)$ is:  \emph{isostatic on $\M$} if it is rigid on $\M$ and, for every edge $e$ of $G$, the framework $(G-e,p)$ is flexible on $\M$;
 \emph{redundantly rigid on $\M$}  if $(G-e,p)$ is rigid on $\M$  for all $e\in E$; and
\emph{globally rigid on $\M$} if every framework $(G,q)$ on $\M$ which is equivalent to $(G,p)$ is congruent to $(G,p)$.

An \textit{infinitesimal flex} $s$ of $(G, p)$ on $\M$ is a map $s:V\to \bR^3$ such that
$s(v)$ is tangential to $\M$ for all $v\in V$ and $(p(u)-p(v))\cdot(s(u)-s(v))=0$
for all $uv\in E$.
The framework $(G,p)$ is \emph{infinitesimally rigid} on $\M$ if every infinitesimal flex of $(G,p)$ is an infinitesimal isometry of $\M$.

We consider four 2-dimensional surfaces:
\begin{itemize}
\item the unit sphere $\S=S^1\times S^1$ centered at the origin, defined by the equation $x^2+y^2+z^2=1$;
\item the unit cylinder $\Y=S^1 \times \bR$ about the $z$-axis, defined by the equation $x^2+y^2=1$;
\item the unit cone $\C$ about the $z$-axis, defined by the  equation $x^2+y^2=z^2$;
\item the ellipsoid $\E$ centered at the origin, defined by the  equation $x^2+{a}y^2+{b}z^2=1$ for some fixed $a,b\in \bQ$ 
with $1<a<b$.
\end{itemize}

These are natural examples of surfaces for which the dimension of the space of infinitesimal isometries is 3, 2, 1 and 0, respectively.
Henceforth, we will use $\M$ to denote one of the four surfaces defined above and $\ell$ for the the dimension of its space of infinitesimal isometries.

The problem of determining whether or not a given framework on $\M$ is rigid is a difficult problem in algebraic geometry. It becomes tractable however if we restrict our attention to `generic' frameworks. We consider a framework $(G,p)$ on $\M$ to be \emph{generic} if $\td[\bQ(p):\bQ]=2|V|$, where $\td[\bQ(p):\bQ]$ denotes the transcendence degree of the field extension. Thus  $(G,p)$ is generic on $\M$ if the coordinates of the vertices of $G$ are as algebraically independent as possible. For generic frameworks the problem of determining rigidity reduces to that of determining infinitesimal rigidity.

\begin{thm}[\cite{NOP}]\label{nop}
Let $(G,p)$ be a generic framework on $\M$.
Then $(G,p)$  is rigid on $\M$ if and only if $G$ is a complete graph on at most $5-\ell$ vertices, or $(G,p)$  is infinitesimally rigid on $\M$.
\end{thm}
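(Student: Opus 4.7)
The plan is to adapt Asimow and Roth's classical argument equating generic rigidity with infinitesimal rigidity in Euclidean space to the surface setting, where the ambient configuration space is the smooth $2|V|$-dimensional real algebraic variety $\M^{|V|}$. The central object is the edge function $f_G : \M^{|V|} \to \bR^{|E|}$, $f_G(p) = (\|p(u)-p(v)\|^2)_{uv\in E}$, whose differential at $p$, regarded as a linear map $T_p \M^{|V|} = \bigoplus_v T_{p(v)}\M \to \bR^{|E|}$, is the rigidity matrix $R(G,p)$; its kernel is the space of infinitesimal flexes of $(G,p)$ on $\M$. The derivative of the orbit map $\mathrm{Isom}(\M) \to \M^{|V|}$, $g\mapsto g\cdot p$, produces an $\ell$-dimensional subspace of $\ker R(G,p)$ at any generic $p$, so $\rank R(G,p) \leq 2|V|-\ell$, with equality precisely when $(G,p)$ is infinitesimally rigid on $\M$.

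For the easy direction, if $(G,p)$ is infinitesimally rigid then the implicit function theorem applied to $f_G$ at $p$ makes the fibre $f_G^{-1}(f_G(p))$ locally a smooth $\ell$-dimensional submanifold of $\M^{|V|}$, which contains and therefore coincides with the $\ell$-dimensional isometry orbit through $p$; hence every continuous edge-preserving motion is isometric and $(G,p)$ is rigid. The other alternative is immediate: if $G = K_n$ with $n\leq 5-\ell$, equivalent frameworks are congruent by the definition of equivalence on a complete graph.

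For the converse I would argue the contrapositive. Suppose $(G,p)$ is generic, $G$ is not a complete graph on at most $5-\ell$ vertices, and $(G,p)$ is not infinitesimally rigid; the goal is to produce a non-trivial continuous motion on $\M$ through equivalent but non-congruent frameworks. Genericity makes $\rank R(G,p)$ equal to the generic (and hence maximal) value of $\rank R$ on $\M^{|V|}$, so $\rank R$ is locally constant at $p$. The constant rank theorem then presents $f_G^{-1}(f_G(p))$ locally as a smooth manifold of dimension $2|V|-\rank R(G,p) > \ell$, admitting a smooth curve $p_t$ through $p_0 = p$ that is transverse to the isometry orbit.

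The main obstacle is upgrading ``non-isometric'' to ``non-congruent'': one must show that some pairwise distance $\|p_t(u)-p_t(v)\|$ with $uv\notin E(G)$ varies along $p_t$. If no such distance varied then each $p_t$ would be congruent to $p_0$ in $\bR^3$; invoking that a generic point set on $\M$ determines $\M$ uniquely (so the $\bR^3$-isometry relating $p_t$ to $p_0$ must preserve $\M$) would force $p_t$ to lie in the $\mathrm{Isom}(\M)$-orbit of $p_0$, contradicting transversality. The hypothesis excluding small complete graphs is precisely what guarantees enough non-edges, and enough vertices for the surface-determination step, so that this last argument applies.
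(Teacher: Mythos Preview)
The paper does not prove this theorem; it is imported from \cite{NOP} without argument, so there is no in-paper proof to compare against. Your outline follows the Asimow--Roth template, which is the natural route and almost certainly the one taken in \cite{NOP}.

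There is, however, a real gap in your converse. You propose to pass from ``transverse to the $\mathrm{Isom}(\M)$-orbit'' to ``non-congruent'' via surface determination: if every pairwise distance were preserved along $p_t$, the $\bR^3$-isometry taking the generic $p_0$ to $p_t$ would have to fix $\M$, because a generic point set on $\M$ determines $\M$ among its congruent copies. But the number of generic points needed for that determination (Lemma~\ref{lem:uniquec} requires $4+\gamma$, namely $4,5,6,6$ on $\S,\Y,\C,\E$) strictly exceeds the threshold $5-\ell$ in the theorem ($2,3,4,5$ respectively). Thus, for instance, on the cylinder a non-complete graph on four vertices lies outside the ``small complete'' exception, yet four points do not pin down the cylinder and your argument does not close there. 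Your final sentence, asserting that the hypothesis ``guarantees \ldots\ enough vertices for the surface-determination step'', is simply false.

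The standard repair, and presumably what \cite{NOP} actually does, is to compare $f_G$ directly with $f_{K_n}$ in the Asimow--Roth manner: at a generic $p$ both fibres are locally smooth by the constant rank theorem, and $(G,p)$ is rigid iff their local dimensions agree, i.e.\ iff $\rank df_G|_p=\rank df_{K_n}|_p$. The theorem then reduces to two concrete facts about complete graphs on $\M$: that $(K_n,p)$ is infinitesimally rigid at a generic $p$ once $n>5-\ell$, and that for $n\le 5-\ell$ every edge of $K_n$ is independent in the surface rigidity matroid (so any proper subgraph has strictly smaller rank and hence a strictly larger fibre). These are finite, surface-specific calculations; they are the missing ingredients in your sketch, and no amount of surface determination will supply them.
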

We note that \cite{NOP} uses a different definition for a generic framework on $\M$. Corollary \ref{cor:realvar} below verifies that any framework 
which satisfies our definition will also satisfy the definition given in \cite{NOP}.

Theorem \ref{nop}, combined with Theorem \ref{thm:surfacerank} below, imply that the problem of determining generic rigidity on $\M$ depends only on the underlying graph.
This problem has been solved for three of our chosen surfaces.

\begin{thm}[\cite{Lam},\cite{NOP},\cite{NOP2}]\label{thm:cylinderlaman}
Let $(G,p)$ be a generic framework on $\M$ and suppose that $\M\in \{\S,\Y,\C\}$.
Then $(G,p)$ is isostatic on $\M$ if and only if $G$ is $K_n$ for $1\leq n \leq 5-\ell$ or $|E|=2|V|-\ell$ and every subgraph $H=(V',E')$
 of $G$ has $|E'|\leq 2|V'|-\ell$.
\end{thm}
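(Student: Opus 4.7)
The plan is to attack both directions through the surface rigidity matrix $R_\M(G,p)$ obtained by restricting the classical edge-length differentials to the tangent spaces $T_{p(v)}\M$, yielding an $|E|\times 2|V|$ matrix whose kernel consists of the tangential infinitesimal flexes of $(G,p)$. The restrictions to the vertex set of the infinitesimal isometries of $\M$ always lie in this kernel, giving the a priori bound $\rank R_\M(G,p)\le 2|V|-\ell$.

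For necessity, suppose $(G,p)$ is isostatic on $\M$. By Theorem \ref{nop}, $(G,p)$ is infinitesimally rigid, so $\rank R_\M(G,p)=2|V|-\ell$; isostaticity then forces the $|E|$ rows of $R_\M$ to be linearly independent, giving $|E|=2|V|-\ell$. For any subgraph $H=(V',E')$ of sufficient size, the $E'$-indexed rows of $R_\M(G,p)$ have nonzero entries only in columns indexed by $V'$, so they also form an independent set of rows in $R_\M(H,p|_{V'})$, yielding $|E'|\le\rank R_\M(H,p|_{V'})\le 2|V'|-\ell$.

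For sufficiency I would proceed by induction on $|V|$. The base cases $G=K_n$ for $n\le 5-\ell$ are handled by direct computation of the generic rank of $R_\M$ on each of the three surfaces. For the inductive step, any graph satisfying the sparsity conditions with $|V|>5-\ell$ must contain a vertex $v$ of degree $2$ or $3$, since the average degree of $G$ is $4-2\ell/|V|$ and no subgraph is denser than $G$ itself. A degree-$2$ deletion (a Henneberg $0$-reduction) produces a smaller graph still satisfying the sparsity conditions; by induction it is generically isostatic on $\M$, and a direct tangent-plane calculation shows that re-adding $v$ at a generic point of $\M$ preserves isostaticity. A degree-$3$ deletion combined with an edge addition between two former neighbours of $v$ (a Henneberg $1$-reduction) produces a smaller $(2,\ell)$-sparse graph, with the existence of a valid choice of edge guaranteed by a standard counting argument.

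The main obstacle is showing that the inverse $1$-extension preserves generic isostaticity on $\M$: one must prove that for some placement of the new degree-$3$ vertex, the rigidity matrix acquires three linearly independent new rows that together with the existing ones cover the former edge that was deleted in the reduction. The standard technique is to work at a carefully chosen degenerate configuration where the dependence structure of $R_\M$ can be read off explicitly, then argue by semicontinuity of rank that generic placements inherit the required independence. The rank calculation is surface-specific: for $\S$ it reduces via the Connelly--Whiteley correspondence to Laman's original planar proof \cite{Lam}, while for $\Y$ and $\C$ the analyses are carried out in \cite{NOP} and \cite{NOP2} respectively.
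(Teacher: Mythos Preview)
The paper does not prove this theorem; it is quoted with citations to \cite{Lam}, \cite{NOP}, and \cite{NOP2} as established background, and no proof appears in the paper. There is therefore nothing in the paper to compare your proposal against.

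Your outline is broadly the strategy of the cited sources: a rank argument on the surface rigidity matrix for necessity, and an inductive Henneberg-type construction for sufficiency, with the geometric verification of the extension moves being the substantive work. Two points deserve care. First, in the necessity direction you invoke Theorem~\ref{nop} to get infinitesimal rigidity, but that theorem has the alternative ``$G$ is $K_n$ for $n\le 5-\ell$'' precisely because such small complete frameworks are rigid on $\M$ without being infinitesimally rigid; your rank identity $|E|=2|V|-\ell$ fails for them, so they must be split off as a separate case rather than absorbed into the matrix argument. Second, your claim that Henneberg $0$- and $1$-reductions always suffice, justified by ``a standard counting argument'', is where the cited papers do their real work: for $(2,3)$-tight graphs this is Laman's theorem, but for the cylinder and especially the cone the $(2,2)$- and $(2,1)$-tight classes require an enlarged set of inductive operations (and corresponding geometric lemmas) beyond the two Henneberg moves, because a valid $1$-reduction need not exist at a degree-$3$ vertex. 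Deferring to \cite{NOP} and \cite{NOP2} is appropriate, but the sentence asserting that the two basic reductions suffice is not correct as stated.
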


It is an open problem to characterise generic rigidity on the ellipsoid.
The analogous condition to that given in Theorem \ref{thm:cylinderlaman} is known to be necessary:

\begin{lem}[\cite{NOP}]\label{thm:ellipsoidmaxwell}
Let $(G,p)$ be a generic framework on $\E$.
If $(G,p)$ be isostatic then $G$ is $K_n$ for $1\leq n \leq 4$ or $|E|=2|V|$ and every subgraph $H=(V',E')$ of $G$ has $|E'|\leq 2|V'|$.
\end{lem}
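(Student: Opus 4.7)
The proof I would give follows the classical Maxwell-count pattern, carried through at the level of the rigidity matrix adapted to the ellipsoid. First I would introduce the rigidity matrix $R(G,p)$ of size $(|V|+|E|)\times 3|V|$ whose rows are of two types: for each vertex $v\in V$, the row carrying the outward normal $\nabla F(p(v))=(2x_v,2ay_v,2bz_v)$ in the three columns associated with $v$ and zeros elsewhere; and for each edge $uv\in E$, the usual bar-edge row with $p(u)-p(v)$ in the columns of $u$ and $p(v)-p(u)$ in the columns of $v$. By definition the kernel of $R(G,p)$ is the space of infinitesimal flexes of $(G,p)$ on $\E$, and since $\ell=0$ on the ellipsoid, $(G,p)$ is infinitesimally rigid on $\E$ if and only if $\rank R(G,p)=3|V|$.

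Assume now that $(G,p)$ is isostatic on $\E$. The cases $G=K_n$ for $1\le n\le 4$ give the first alternative in the conclusion, and $G=K_5$ is checked directly since $|E|=10=2|V|$ and every proper subgraph of $K_5$ trivially satisfies $|E'|\le 2|V'|$. In the remaining case $G$ is not a complete graph on at most five vertices, so Theorem \ref{nop} applies both to $G$ and to each $G-e$; this converts the hypothesised rigidity of $(G,p)$ and flexibility of $(G-e,p)$ into $\rank R(G,p)=3|V|$ and $\rank R(G-e,p)<3|V|$ for every $e\in E$. In particular, each edge row of $R(G,p)$ lies outside the span of the remaining rows.

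The $|V|$ normal rows are linearly independent because they have pairwise disjoint supports and each is nonzero (since $p(v)\in\E$ forces $\nabla F(p(v))\ne 0$). Combined with the preceding paragraph, all $|V|+|E|$ rows of $R(G,p)$ are linearly independent, so $|V|+|E|=\rank R(G,p)=3|V|$, giving $|E|=2|V|$.

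For the sparsity condition, given a subgraph $H=(V',E')$ of $G$ I would examine the submatrix of $R(G,p)$ formed by the rows indexed by $V'$ and $E'$. These rows are a subset of a linearly independent set, hence themselves independent, and all of their nonzero entries lie in the $3|V'|$ columns associated with $V'$. Counting rows against effective columns yields $|V'|+|E'|\le 3|V'|$, that is, $|E'|\le 2|V'|$. The only mildly delicate step in this plan is verifying that the hypotheses of Theorem \ref{nop} apply uniformly to each $G-e$ so that the continuous flexibility given by the isostatic assumption is reflected in a genuine rank drop; this is routine once the small complete graphs have been handled separately.
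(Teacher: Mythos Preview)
The paper does not give its own proof of this lemma; it is simply quoted from \cite{NOP}. So there is nothing in the present paper to compare your argument against.

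That said, your argument is the standard Maxwell--Laman count carried out with the augmented rigidity matrix $R(G,p)=dF_G|_p$, and it is correct. Two small remarks. First, your case split is complete: once $K_1,\dots,K_5$ are set aside, in the remaining case $G$ is not $K_n$ with $n\le 5$, and neither is any $G-e$ (same vertex set, and removing an edge cannot create a complete graph), so Theorem~\ref{nop} legitimately converts rigidity/flexibility into the rank statements you need. Second, the step ``each edge row lies outside the span of the remaining rows, and the vertex rows are independent among themselves, hence all rows are independent'' is genuinely valid: any nontrivial dependence must involve some edge row with nonzero coefficient, contradicting the first clause; otherwise it is a dependence among vertex rows alone, contradicting the second. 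With that, the global count $|V|+|E|=3|V|$ and the subgraph bound $|V'|+|E'|\le 3|V'|$ follow exactly as you wrote. This is precisely the argument one expects in \cite{NOP}, specialised to $\ell=0$.
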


However, the graph constructed by adding a vertex of degree two to $K_5$ has an infinitesimal flex for every generic realisation on the ellipsoid. This
shows that the condition  in Lemma \ref{thm:ellipsoidmaxwell} is not sufficient to imply generic rigidity.

\section{Generic Points and Smooth Manifolds}
\label{sec:alggeom}

Let $K,L$ be fields such that $\bQ\subseteq K \subseteq L \subseteq \bC$.
Let $W$ be an algebraic variety over $K$ in $L^n$, i.e. $W=\{x\in L^n: f_i(x)=0 \mbox{ for all } 1 \leq i \leq m\}$ for some $f_1,f_2,\dots, f_m\in K[X]$. We assume that $W$ is \emph{irreducible}; i.e. cannot be expressed as the union of two proper subvarieties.
The {\em dimension} of $W$, $\dim W$, is the maximum length of a chain of subvarieties of $W$.
A point $p\in W$ is {\em generic over $K$} if every $h\in K[X]$ satisfying
$h(p)=0$ has $h(x)=0$ for all $x\in W$. Given an integral domain $R$ we use $\fract(R)$
to denote the field of fractions of $R$.

\begin{lem}\label{lem:genericvariety}
Let $W$ be an irreducible variety over $K$ in $\bC^n$, $p\in W$, and
$I=\{f\in K[X]\,:\,f(x)=0 \mbox{ for all }x\in W\}.$
Then:\\
(a) $\dim W=\td[\fract(K[X]/I):K]$;\\
(b) The map $h+I\mapsto h(p)$ is a surjective ring homomorphism from $K[X]/I$
to $K(p)$, and is a ring isomorphism if and only if $p$ is a generic point of $W$ over $K$;\\
(c) $\td [K(p):K]\leq \dim W$, and equality holds if and only if $p$ is a generic point of $W$ over $K$.
\end{lem}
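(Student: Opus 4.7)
The plan is as follows. Since $W$ is irreducible, $I$ is a prime ideal of $K[X]$, so $A := K[X]/I$ is an integral domain — the coordinate ring of $W$ over $K$. Parts (a) and (b) are independent and I would establish each in turn; (c) then drops out from combining them.

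For (a), I would invoke the standard correspondence between irreducible subvarieties of $W$ and prime ideals of $A$: a chain $W_0 \subsetneq W_1 \subsetneq \cdots \subsetneq W_k = W$ of irreducible subvarieties corresponds to a strictly descending chain of prime ideals in $A$, so $\dim W$ equals the Krull dimension of $A$. Since $A$ is a finitely generated integral $K$-algebra, Noether normalisation together with the classical dimension theorem for such algebras identifies its Krull dimension with $\td[\fract(A):K]$, which is what is claimed.

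For (b), define $\phi \colon A \to K(p)$ by $\phi(h+I) = h(p)$. It is well-defined because $p \in W$ forces any $h \in I$ to satisfy $h(p) = 0$, and it is manifestly a ring homomorphism with image the subring $K[p] \subseteq K(p)$ generated by the coordinates of $p$. The kernel is $\{h+I : h(p)=0\}$, which is trivial precisely when every $h \in K[X]$ vanishing at $p$ already lies in $I$ — this is exactly the defining condition for $p$ to be generic over $K$. Hence $\phi$ is an isomorphism onto its image if and only if $p$ is generic.

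For (c), since $K(p) = \fract(K[p])$ and transcendence degree depends only on the fraction field, $\td[K(p):K] = \td[\fract(K[p]):K]$. Writing $K[p] \cong A/\ker\phi$, where $\ker\phi$ is prime in $A$ (as $K[p]$ is a domain), I would apply the standard fact that in a finitely generated integral $K$-algebra, quotienting by a nonzero prime strictly decreases the transcendence degree of the fraction field. This yields $\td[K(p):K] \leq \td[\fract(A):K] = \dim W$, with equality precisely when $\ker\phi = 0$, which by (b) is the condition that $p$ is generic.

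The main obstacle is essentially bookkeeping — invoking the right commutative-algebra inputs cleanly. The non-trivial ingredients are the Krull-dimension/transcendence-degree identity for finitely generated integral $K$-algebras (driving (a)) and the strict drop of transcendence degree under a surjection onto a proper quotient of such algebras (driving the equality case in (c)). Neither is deep, but both must be cited with some care, and both rely on $A$ being finitely generated over $K$, which holds because $A$ is a quotient of a polynomial ring.
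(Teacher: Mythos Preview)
Your argument is correct. Parts (a) and (b), as well as the inequality in (c), match the paper's treatment: the paper simply cites (a) as well known and calls (b) elementary, and obtains $\td[K(p):K]\leq \dim W$ from (a) together with the surjectivity in (b), just as you do. (Your observation that the image is really $K[p]$ rather than $K(p)$ is a fair tightening of the statement; it makes no difference for (c) since $\td[K(p):K]=\td[\fract(K[p]):K]$.)

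The one genuine difference is in the equality case of (c). You invoke the algebraic fact that quotienting a finitely generated integral $K$-algebra by a nonzero prime strictly lowers the transcendence degree of its fraction field, so $\ker\phi=0$ exactly when equality holds. The paper instead argues geometrically: given $h\in K[X]$ with $h(p)=0$, it takes the irreducible component $W_2$ of $\{x\in W: h(x)=0\}$ containing $p$ and applies the already-proved inequality to $W_2$ to get $\dim W_2\geq \td[K(p):K]=\dim W$, forcing $W_2=W$ and hence $h\in I$. The paper's route is slightly more self-contained, recycling only the inequality just established rather than a further dimension-theory input; your route is cleaner algebraically but imports one more standard fact. Both are entirely satisfactory.
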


\begin{proof} Part (a) is well known (for example see \cite[Exercise 3.20 (b)]{Har}) and Part (b) is elementary.
Part (a) along with the first part of (b) implies that
$$\dim W=\td[\fract(K[X]/I):K]\geq \td[K(p):K].$$
The second part of (b) tells us that if $p$ is generic then equality
holds in the above inequality. It remains to show that $p$ is a generic point of $W$ over $K$
when $\td [K(p):K]= \dim W$. Let $h\in K[X]$ with $h(p)=0$,
$W_1=\{x\in W\,:\,h(x)=0\}$, and let $W_2$ be the irreducible component of $W_1$ which contains
$p$.  The argument in the second sentence of the proof tells
us that $\dim W_2\geq \td[K(p):K]=\dim W$. The definition of dimension and the fact that
$W_2$ is a subvariety of $W$ now imply that $W_2=W$. Hence $h(x)=0$ for all $x\in W$ and $p$ is a generic point of $W$ over $K$.
\end{proof}

\begin{cor}\label{cor:realvar}
Let $K$ be a field with $\bQ\subseteq K\subseteq \bR$, $W$ be an irreducible variety over $K$ in $\bC^n$ and $p\in W\cap \bR^n$. If $\td[K(p):K]=\dim W$ then $p$ is a generic point of $W$ and hence is also a generic point of $W\cap \bR^n$.
\end{cor}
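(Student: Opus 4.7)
The plan is to read both assertions as immediate consequences of Lemma~\ref{lem:genericvariety}(c) together with the definition of a generic point.

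First I would handle the claim that $p$ is a generic point of $W$. Since $p\in W$ and $\td[K(p):K]=\dim W$ by hypothesis, the equality case of Lemma~\ref{lem:genericvariety}(c) applies directly and gives that $p$ is a generic point of $W$ over $K$. No further work is needed here.

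Next I would turn to the claim that $p$ is a generic point of $W\cap\bR^n$. Inspecting the definition of ``generic point'' given in the paragraph preceding Lemma~\ref{lem:genericvariety}, this definition does not in fact invoke the ambient algebraic structure beyond the requirement that every $h\in K[X]$ with $h(p)=0$ vanishes on the entire set. Thus to verify that $p$ is generic in $W\cap\bR^n$ over $K$, I would take an arbitrary $h\in K[X]$ with $h(p)=0$ and use the first conclusion to deduce $h(x)=0$ for all $x\in W$; since $W\cap\bR^n\subseteq W$, this yields $h(x)=0$ for all $x\in W\cap\bR^n$, as required.

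There is no serious obstacle here: the corollary is little more than a packaging of Lemma~\ref{lem:genericvariety}(c) together with the trivial observation that a polynomial vanishing identically on $W$ vanishes on every subset of $W$. The only point that needs a moment's care is confirming that the definition of genericity used in the paper applies unchanged to the real set $W\cap\bR^n$ without requiring an irreducibility hypothesis for it; this is the case because the definition stipulates only a property of polynomials in $K[X]$ and of the point $p$, with the ambient set appearing solely as the locus on which $h$ must vanish.
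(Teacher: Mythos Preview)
Your proposal is correct and matches the paper's approach: the paper offers no explicit proof for this corollary, treating it as an immediate consequence of Lemma~\ref{lem:genericvariety}(c) together with the definition of generic point, exactly as you have unpacked it.
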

It follows that if $(G,p)$ is a generic framework with $n$ vertices on $\M$, then $p$ is a generic point of the irreducible variety $\M^n$ over $K$.

\medskip

Let $X$ be a smooth manifold and $f:X\rightarrow \bR^m$ be a smooth map. Then $x \in X$ is said to be a \emph{regular point of $f$} if $df|_x$ has maximum rank and is a \emph{critical point of $f$} otherwise. Also $f(x)$ is said to be a \emph{regular value of $f$} if, for all $y \in f^{-1}(f(x))$, $y$ is a regular point of $f$; otherwise $f(x)$ is a \emph{critical value of $f$}.

\begin{lem}[\cite{J&K}]\label{lem:JK2011}
Let $M$ be a smooth manifold and $x\in M$. Suppose $\theta:M\rightarrow \bR^a$ and $F:M\rightarrow \bR^b$ are smooth maps. Define $H:M\rightarrow \bR^{a+b}$ by $H(y)=(F(y),\theta(y))$. Suppose $\theta(x)$ is a regular value of $\theta$. Let $X$ be the submanifold $\theta^{-1}(\theta(x))$ of $M$, and let $f$ be the restriction of $F$ to $X$. Then $\rank df|_x=\rank dH|_x - \rank d\theta|_x$.
\end{lem}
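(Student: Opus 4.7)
The plan is to reduce the statement to an elementary computation with kernels and ranks on the tangent space at $x$, using the regular-value hypothesis to identify the tangent space to the level set. First I would note that since $\theta(x)$ is a regular value, $d\theta|_x$ has full rank $a$, so $\rank d\theta|_x = a$, and the implicit function theorem presents $X=\theta^{-1}(\theta(x))$ as a smooth embedded submanifold near $x$ with
\[
T_xX = \ker d\theta|_x, \qquad \dim T_xX = \dim M - a.
\]

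Second, I would identify the kernel of $df|_x$. Since $f$ is the restriction of $F$ to $X$, its differential $df|_x$ is the restriction of $dF|_x$ to the subspace $T_xX = \ker d\theta|_x$, and hence
\[
\ker df|_x = \ker dF|_x \cap \ker d\theta|_x.
\]
The key observation is that $\ker dH|_x$ computes the same intersection: because $dH|_x(v) = (dF|_x(v), d\theta|_x(v))$, a vector $v$ is killed by $dH|_x$ if and only if it is killed by both $dF|_x$ and $d\theta|_x$. Therefore $\ker dH|_x = \ker df|_x$.

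Finally, I would conclude by applying the rank--nullity theorem twice. On the domain $T_xX$ this gives
\[
\rank df|_x = \dim T_xX - \dim \ker df|_x = (\dim M - a) - \dim \ker dH|_x,
\]
while on $T_xM$ it gives $\rank dH|_x = \dim M - \dim \ker dH|_x$. Subtracting these identities and recalling that $a = \rank d\theta|_x$ yields the required equality $\rank df|_x = \rank dH|_x - \rank d\theta|_x$.

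I do not anticipate a genuine obstacle here: the lemma is essentially a bookkeeping exercise in linear algebra that rests entirely on the identification $T_xX = \ker d\theta|_x$ supplied by the regular-value hypothesis. The only step deserving a moment's care is verifying that the two kernels $\ker df|_x$ and $\ker dH|_x$ coincide; once that is in hand, dimension counting finishes the argument.
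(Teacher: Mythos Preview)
Your argument is correct and is the standard proof of this elementary fact. Note, however, that the paper does not supply its own proof of this lemma: it is quoted from \cite{J&K} and stated without proof, so there is nothing in the paper to compare your approach against.
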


Note that $\theta^{-1}(\theta(x))$ is a submanifold of $M$ by the following well known result, see for example \cite[Page 11, Lemma 1]{Mil}.

\begin{lem}\label{lem:milnorp11lem1}
Let $X$ and $Y$ be smooth manifolds
 and $f:X\rightarrow Y$ be a smooth map. Suppose that $M$ has dimension $m$, $x \in X$, $f(x)$ is a regular value of $f$ and $\rank df|_x=t$. Then $f^{-1}(f(x))$ is an $(m-t)$-dimensional smooth manifold.
\end{lem}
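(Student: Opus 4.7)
The strategy is entirely local: I would show that around each point $y\in f^{-1}(f(x))$ there is a chart of $X$ in which $f^{-1}(f(x))$ appears as a coordinate slice of dimension $m-t$, and then piece these charts together to give the smooth manifold structure.

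Fix $y\in f^{-1}(f(x))$. Since $f(x)$ is a regular value, $df|_y$ has maximum rank, which must equal $t=\rank df|_x$ (both equal $\min(\dim X,\dim Y)$). Rank is lower semicontinuous, so $\rank df \geq t$ on some neighbourhood $U$ of $y$; but $t$ is the maximum possible rank, so in fact $f|_U$ has constant rank $t$. Now apply the constant-rank theorem: shrinking $U$ if necessary, there exist smooth charts $(U,\varphi)$ around $y$ with $\varphi(y)=0$ and $(V,\psi)$ around $f(y)$ with $\psi(f(y))=0$ such that
\[
\psi\circ f\circ \varphi^{-1}(u_1,\ldots,u_m)=(u_1,\ldots,u_t,0,\ldots,0).
\]
In these coordinates $f^{-1}(f(x))\cap U$ is precisely the slice $\{u_1=\cdots=u_t=0\}$, so $\varphi$ restricted to $f^{-1}(f(x))\cap U$, composed with projection onto the last $m-t$ coordinates, is a chart for $f^{-1}(f(x))$ of dimension $m-t$ near $y$.

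Since $y$ was arbitrary, the resulting charts cover $f^{-1}(f(x))$; their transition maps are restrictions of smooth transition maps of $X$, hence are smooth. This shows $f^{-1}(f(x))$ is a smooth $(m-t)$-dimensional submanifold of $X$. The only slightly delicate point — and the one that would occupy me longest — is justifying that the regular-value hypothesis upgrades pointwise maximality of $\rank df$ to constant rank on an open neighbourhood, since the constant-rank theorem (rather than the more familiar submersion or immersion versions of the implicit function theorem) is what gives a uniform local normal form when $t$ is strictly between $\dim X$ and $\dim Y$. Once this is in place, the rest is a routine application of a standard theorem from differential topology, which is why the authors simply cite Milnor.
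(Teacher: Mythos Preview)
Your argument is correct, and since the paper does not prove this lemma but merely cites Milnor, there is no ``paper's proof'' to compare against beyond the standard one. Your use of the constant-rank theorem is a clean way to obtain the local normal form, and the observation that lower semicontinuity of rank plus maximality of $t$ forces constant rank near each fibre point is exactly the right justification.

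One small remark: because the paper defines a regular point as one where $df$ has \emph{maximum} rank, we always have $t=\min(\dim X,\dim Y)$, so $t$ is never strictly between the two dimensions. Hence near each fibre point $f$ is locally either a submersion ($t=\dim Y$) or an immersion ($t=\dim X$), and the full constant-rank theorem is more than is needed; Milnor's original inverse-function-theorem argument for the submersion case already covers every application made in the paper. This does not affect the validity of your proof, only the remark in your final paragraph.
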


\section{The Rigidity Map}
\label{sec:gen}

We assume henceforth that $G=(V,E)$ is a graph with $V=\{v_1,v_2,\ldots,v_n\}$ and $E=\{e_1,e_2,\ldots,e_m\}$.
The {\em rigidity map} $f_G:\bR^{3n}\rightarrow \bR^m$ is defined by $f_G(p)=(\|e_1 \|^2, \dots, \|e_m\|^2)$ where $\|e_i\|^2=\|p(v_j)-p(v_k)\|^2$
when $e_i=v_jv_k$.
Let $\theta_G:\bR^{3n}\rightarrow \bR^n$ be the map defined by $\theta_G(p)=(h(p(v_1)),\dots, h(p(v_n)))$ where: 
$$h(x_i,y_i,z_i)= \begin{cases}
x_i^2+y_i^2+z_i^2-1, & \text{if } \M=\S; \\
 x_i^2+y_i^2-1, & \text{if }\M=\Y; \\
 x_i^2+y_i^2-z_i^2, & \text{if }\M=\C; \\
  x_i^2+{a}y_i^2+{b}z_i^2-1, & \text{if }\M=\E.
\end{cases} $$
The
{\em $\M$-rigidity map} $F_G:\bR^{3n}\rightarrow \bR^{m+n}$ is defined by $F_G=(f_G,\theta_G)$.
The Jacobian matrix for the derivative of $F_G$ evaluated at any point $p\in \M^n$ is (up to scaling) the {\em rigidity matrix} for the framework $(G,p)$ on $\M$.
It is shown in \cite{NOP} that the null space of this matrix is the space of infinitesimal flexes of $(G,p)$ on $\M$. This allows us to
characterise isostatic generic frameworks in terms of $dF_G$.

\begin{thm}[\cite{NOP}]\label{thm:surfacerank}
Let $(G,p)$ be a generic framework on $\M$. Then $(G,p)$ is
isostatic on $\M$ if and only if $ m =2n-\ell$ and $\rank dF_G|_p = 3n-\ell$.
\end{thm}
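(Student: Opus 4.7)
The plan is to recast both sides of the biconditional as rank conditions on the matrix $dF_G|_p$, viewed as a block with $m$ \emph{edge rows} from $df_G|_p$ stacked on $n$ \emph{surface rows} from $d\theta_G|_p$. As a preliminary I would note that $d\theta_G|_p$ is block-diagonal with $i$th block $\nabla h(p(v_i))$; a quick case-check across the four surfaces shows that these gradients are nonzero on $\M$, the only potential degeneracy being the apex of the cone, which a generic framework avoids. Hence $\rank d\theta_G|_p = n$ at every $p\in\M^n$.

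The next step is to identify infinitesimal rigidity with the equality $\rank dF_G|_p = 3n-\ell$. The kernel of $dF_G|_p$ is, by definition of the rigidity matrix, the space of infinitesimal flexes of $(G,p)$ on $\M$; the infinitesimal isometries of $\M$ applied to the vertices always lie in that kernel, and for a generic $p$ (with $n$ large enough that the isometry group acts with trivial generic stabiliser on the vertex set) they span an $\ell$-dimensional subspace of it. Thus infinitesimal rigidity is equivalent to the kernel having dimension exactly $\ell$, i.e.\ to the claimed rank condition. Theorem~\ref{nop} then lets one pass between infinitesimal rigidity and rigidity for generic frameworks.

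For the isostatic biconditional itself, the hypothesis that $(G-e,p)$ be flexible for every edge $e$, together with the fact that $dF_{G-e}|_p$ differs from $dF_G|_p$ only by deletion of one edge row, becomes the statement that every edge row is linearly independent of the remaining rows of $dF_G|_p$. Combined with the already-established independence of the $n$ surface rows, this forces the $m+n$ rows of $dF_G|_p$ to be jointly linearly independent; equating the resulting rank $m+n$ with the rigidity-equivalent value $3n-\ell$ yields $m=2n-\ell$. Running the same chain in reverse supplies the converse: under $m=2n-\ell$ and $\rank dF_G|_p = 3n-\ell$, the $m+n$ rows carry no dependency, so deleting any edge row strictly drops the rank and flexes the corresponding $(G-e,p)$.

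The main pitfall I anticipate is the passage between rigidity and infinitesimal rigidity via Theorem~\ref{nop}, whose statement has the small complete-graph escape clause $K_n$ with $n\leq 5-\ell$. These few exceptional graphs would need to be inspected individually to confirm that the biconditional still holds on them (either because both sides succeed for size-theoretic reasons or because both sides fail), so that the clean rank equivalences above can be quoted in the remaining cases without qualification.
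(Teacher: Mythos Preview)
The paper does not actually prove Theorem~\ref{thm:surfacerank}: it is quoted as a result from \cite{NOP}, with no accompanying argument beyond the remark that the null space of $dF_G|_p$ is the space of infinitesimal flexes of $(G,p)$ on $\M$. There is therefore no in-paper proof to compare your proposal against.

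That said, your reconstruction is sound and is essentially the standard way one would recover the statement from first principles. The identification of $\ker dF_G|_p$ with the infinitesimal flex space, the observation that $\rank d\theta_G|_p=n$ away from the cone apex, and the translation of ``$(G-e,p)$ flexible for every $e$'' into ``every edge row is outside the span of the remaining $m+n-1$ rows'' are all correct, and your linear-algebra step (edge rows each independent of the rest, surface rows independent among themselves, hence all $m+n$ rows jointly independent) is valid. The appeal to Theorem~\ref{nop} is exactly what is needed to bridge rigidity and infinitesimal rigidity in both directions, and you are right to flag the small-complete-graph clause as the only place requiring a separate check: for those graphs one verifies directly that either both sides of the biconditional hold (the graph is isostatic and satisfies $m=2n-\ell$, $\rank dF_G|_p=3n-\ell$) or both fail.
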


Note that for any $p=(x_1,y_1,z_1,\ldots,x_n,y_n,z_n)\in \M^n$ we have
\[ d\theta_G|_p = \begin{bmatrix} dh(p(v_1)) & 0 & \dots & 0 \\  0 & dh(p(v_2)) & \dots & 0 \\ \vdots && \ddots & \vdots\\ 0 & 0 & \dots & dh(p(v_n)) \end{bmatrix} \]
where:
$$dh(x_i,y_i,z_i)= \begin{cases}
 2(x_i,y_i,z_i), & \text{if }\M=\S; \\
 2(x_i,y_i,0), & \text{if }\M=\Y; \\
 2(x_i,y_i,-z_i), & \text{if }\M=\C; \\
  2(x_i,ay_i,{b}z_i), & \text{if }\M=\E.
\end{cases} $$
It follows that $\rank d\theta_G|_p=n$ unless $p(v_i)=(0,0,0)$ for some $1\leq i\leq n$ (which can only occur if $\M=\C$).
Thus $p$ is a regular point of $\theta_G$ for all such $p$. Moreover we have $\theta_G(p)=0$ is a regular value of $\theta_G$ for all $p \in \M^n$ when
$\M\in \{\S,\Y,\E\}$ (and Lemma \ref{lem:milnorp11lem1} applied to $\theta_G$ at any $p\in \M^n$ confirms that $\M^n$ is a manifold of dimension $3n-n=2n$
in these three cases).

\medskip

We say that two frameworks $(G,p)$ and $(G,q)$ on $\M$ are {\em $\M$-congruent} if there is an isometry of $\M$ which maps $(G,p)$ to $(G,q)$.
The framework $(G,p)$ is  in \emph{standard position on $\M$} (with respect to the fixed ordering of the vertices) if $p(v_1)=(x_1,y_1,z_1)$, $p(v_2)=(x_2,y_2,z_2)$ and: $(x_1,y_1,z_1)=(0,1,0)$ and $x_2=0$ if $\M=\S$; $(x_1,y_1,z_1)=(0,1,0)$ if $\M=\Y$; and $x_1=0$ if $\M=\C$. When $\M=\E$ all frameworks  are considered to be in standard position. It is easy to see that any framework $(G,p)$ on $\M$ is $\M$-congruent to a framework
in standard position on $\M$ (as long as $p(v_i)\neq (0,0,0)$ for all $1\leq i\leq n$ when $\M=\C$).
We consider frameworks $(G,p)$ in standard position in order to factor out the continuous isometries of $\M$.

We will need a version of the $\M$-rigidity map for frameworks which are constrained to lie in standard position on $\M$.
Define $\alpha:\bR^{3n}\to \bR^\ell$ by:
$$\alpha(x_1,y_1,z_1,\ldots,x_n,y_n,z_n)= \begin{cases}
 (x_1,z_1,x_2), & \text{if }\M=\S; \\
 (x_1,z_1), & \text{if }\M=\Y;\\
 x_1, & \text{if }\M=\C.
\end{cases} $$
Let $\theta^*_G:\bR^{3n}\to \bR^{n+\ell}$ by $\theta^*_G=(\theta_G,\alpha)$ if $\M\in \{\S,\Y,\C\}$ and
$\theta^*_G=\theta_G$ if $\M=\E$. Let
$F^*_G:\bR^{3n}\to \bR^{m+n+\ell}$ by $F^*_G=(f_G,\theta_G^*)$.
Then the null space of $dF^*_G|_p$ is the space of all infinitesimal flexes of $(G,p)$ which leave
the coordinates in $\alpha(p)$ fixed.

\smallskip

We say a framework $(G,p)$ on $\M$ is \emph{independent} if $\rank dF_G|_p = m+n$ i.e. the rows of (the Jacobian matrix for)
$dF_G|_p$ are linearly independent. 

\begin{lem}\label{lem:matrix}
Let $(G,p)$ be an independent framework on $\M$. Then $dF^*_{G}|_p$ has linearly independent rows and hence 
$\rank dF^*_{G}|_p=m+n+\ell$.
\end{lem}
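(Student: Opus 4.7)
The plan is to apply the rank-additivity identity $\rank\begin{pmatrix}A\\B\end{pmatrix}=\rank A+\dim B(\ker A)$ to the stacked map $dF^*_G|_p=\begin{pmatrix} dF_G|_p \\ d\alpha|_p \end{pmatrix}$. Combined with the hypothesis $\rank dF_G|_p=m+n$, this gives
\[
\rank dF^*_G|_p=(m+n)+\dim d\alpha|_p(\ker dF_G|_p),
\]
so the claim reduces to showing that $d\alpha|_p$ restricts to a surjection from $\ker dF_G|_p$ onto $\bR^\ell$.

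To exhibit this surjection I would work inside the $\ell$-dimensional subspace $\I\subseteq\bR^{3n}$ consisting of infinitesimal isometries of $\M$ applied vertex-wise. Any such infinitesimal isometry preserves all edge lengths and is tangent to $\M$ at every vertex, so $\I\subseteq\ker df_G|_p\cap\ker d\theta_G|_p=\ker dF_G|_p$. Since $\dim\I=\ell=\dim\bR^\ell$, it suffices to show that the restriction $d\alpha|_p|_\I\colon\I\to\bR^\ell$ is injective; it is then automatically an isomorphism, and in particular $d\alpha|_p$ is surjective on the larger space $\ker dF_G|_p$, as required.

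The injectivity reduces to an explicit matrix computation, one surface at a time. For $\M=\E$ there is nothing to check, as $\ell=0$. For $\M=\C$, $\I$ is generated by the infinitesimal rotation $(x,y,z)\mapsto(-y,x,0)$ about the $z$-axis, whose image under $d\alpha|_p=dx_1$ is $-y_1$. For $\M=\Y$, $\I$ also contains the $z$-translation $(0,0,1)$, and the matrix of $d\alpha|_p|_\I$ in this natural basis is diagonal with entries $-y_1$ and $1$. For $\M=\S$, parametrising $\I$ by $\omega\in\bR^3$ via $s_\omega(v)=\omega\times p(v)$ yields a $3\times 3$ matrix for $d\alpha|_p|_\I$ whose determinant, after a short computation, is $\pm z_2$.

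The main obstacle is verifying that the nondegeneracy conditions $y_1\neq 0$ (for $\C$ and $\Y$) and $z_2\neq 0$ (for $\S$) actually hold. These are precisely the conditions secured by the standard-position normalisation introduced in the paragraph just above the lemma: they fail exactly when $p(v_1)$ (or, for $\S$, $p(v_2)$) sits on an axis of continuous symmetry of $\M$, and the most degenerate such configurations (such as $p(v_1)=\pm p(v_2)$ when $v_1v_2\in E$) are already ruled out by the assumed independence of $dF_G|_p$, since in those cases the edge gradient coincides with a combination of vertex surface gradients.
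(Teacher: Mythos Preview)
Your route via rank additivity is different from the paper's. The paper first reduces to the isostatic case by extending any independent $(G,p)$ to an isostatic $(H,p)$ and noting that $dF^*_G|_p$ is obtained from $dF^*_H|_p$ by deleting rows, so row-independence passes down. For isostatic $(H,p)$ it then observes that $\ker dF^*_H|_p$ consists of infinitesimal isometries of $\M$ that fix the $\alpha$-coordinates, asserts this space is trivial, and concludes $\rank dF^*_H|_p=3n=m+n+\ell$. Your argument isolates the same key subclaim---that $d\alpha|_p$ is injective on the space $\I$ of infinitesimal isometries---by direct computation, and the rank-additivity formula lets you bypass the extension step entirely. Both packagings are sound in principle; the paper's is shorter, yours makes the geometric content explicit.

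Your final paragraph, however, does not close the gap. The lemma does \emph{not} assume standard position, so invoking that normalisation is illegitimate; and even granting standard position, on the sphere it forces $p(v_1)=(0,1,0)$ and $x_2=0$ but permits $z_2=0$. If $v_1v_2\notin E$ one may take $p(v_2)=(0,\pm1,0)$ and still have $(G,p)$ independent, yet the infinitesimal rotation about the $y$-axis then lies in $\I\cap\ker d\alpha|_p$ and your injectivity fails. Your fallback via independence only handles the case $v_1v_2\in E$. (To be fair, the paper's one-line assertion that the kernel is ``trivial'' glosses over exactly the same point; the lemma is only ever invoked downstream for quasi-generic frameworks in standard position, where the nondegeneracy genuinely holds, so a clean fix is simply to add that hypothesis to the statement.)
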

\begin{proof} 
We first consider the case when $(G,p)$ is isostatic, and hence infinitesimally rigid. Since 
 the null space of $dF^*_G|_p$ is the space of all infinitesimal flexes of $(G,p)$ which leave
the coordinates in $\alpha(p)$ fixed, this space is trivial. Hence $\rank dF^*_G|_p=3n=n+m+\ell$ and the rows of $dF^*_{G}|_p$ are 
linearly independent. The case when $(G,p)$ is not isostatic follows
since any independent framework $(G,p)$ can be extended to an isostatic framework $(H,p)$, and $dF^*_G|_p$ can then be obtained by deleting rows from
$dF^*_{H}|_p$.
\end{proof}

\section{Quasi-Generic Frameworks}
\label{sec:quasigen}

Two frameworks $(G,p)$ and $(G,q)$ on $\M$ are said to be  {\em $\M$-congruent} if there is an isometry of $\M$ which maps $(G,p)$ to $(G,q)$. Note that
this is a stronger condition than congruence: we may have two equivalent realisations of $K_3$ on the cylinder which are not $\M$-congruent (but clearly must be congruent).
A framework $(G,p)$ is \emph{quasi-generic} on $\M$ if it is $\M$-congruent to a generic framework. 

Let
$$\N=\{p\in \M^n: (G,p) \text{ is in}\break \text{ standard position on $\M$ and $p(v)\neq (0,0,0)$ for all $v\in V$}\}.$$
(Note that the condition that $p(v)\neq (0,0,0)$ for all $v\in V$ is only relevant when $\M=\C$.)
Then $\N$ is a $(2n-\ell)$-dimensional submanifold of $\M^n$.
Let $f_{[G]}=f_{G}|_{\N}$.

\begin{lem}\label{lem:regpoint}
Let $(G,p)$ be an independent framework in standard position on $\M$. Then $\rank df_{[G]}|_p=m$ and hence $p$ is a regular point of $f_{[G]}$.
\end{lem}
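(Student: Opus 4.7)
The plan is to apply Lemma \ref{lem:JK2011} with $\theta = \theta_G^*$ and $F = f_G$ (so that $H = F_G^*$), working on the open submanifold $U := \{q \in \bR^{3n} : q(v_i) \neq (0,0,0) \text{ for all } i\}$ of $\bR^{3n}$; the puncture is only relevant when $\M = \C$. The set $\N$ is a union of connected components of $U \cap (\theta_G^*)^{-1}(0)$ (the sign conditions in the definition of standard position on $\S$ and $\Y$ select the component with $y_1 = 1$), so computing $\rank df_{[G]}|_p$ is equivalent to computing the rank at $p$ of $f_G$ restricted to the full submanifold $U \cap (\theta_G^*)^{-1}(0)$.

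The key computation is that $\theta_G^*(p) = 0$ is a regular value of $\theta_G^*|_U$, with $\rank d\theta_G^*|_q = n + \ell$ for every $q$ in the preimage. The Jacobian $d\theta_G$ is block-diagonal with $i$-th block $dh(q(v_i))$, and each such block is a nonzero row because $q(v_i) \neq (0,0,0)$, contributing $n$ to the rank. The $\ell$ additional rows coming from $d\alpha$ involve only the coordinates of $v_1$ and, when $\M = \S$, of $v_2$. A short case check in each of $\M \in \{\S,\Y,\C\}$ shows that these extra rows together with the corresponding $dh$-block rows are linearly independent: for example on $\S$ the three rows in the $(x_1,y_1,z_1)$-block are $(0,2y_1,0)$, $(1,0,0)$, $(0,0,1)$ with $y_1 = \pm 1$, and the two rows in the $(x_2,y_2,z_2)$-block are $(0,2y_2,2z_2)$ and $(1,0,0)$ with $(y_2,z_2) \neq (0,0)$; the cases $\M = \Y$ and $\M = \C$ are analogous, and $\M = \E$ is immediate because $\alpha$ is absent and the ellipsoid avoids the origin. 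This step is the main obstacle, in the sense that it requires a consistent case analysis, although no individual case is difficult.

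With regularity established, Lemma \ref{lem:matrix} gives $\rank dF_G^*|_p = m + n + \ell$, and Lemma \ref{lem:JK2011} yields
$$\rank df_{[G]}|_p \;=\; \rank dF_G^*|_p - \rank d\theta_G^*|_p \;=\; (m+n+\ell) - (n+\ell) \;=\; m.$$
Since $f_{[G]}$ takes values in $\bR^m$ this is the maximal possible rank, and hence $p$ is a regular point of $f_{[G]}$.
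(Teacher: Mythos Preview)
Your argument is correct and follows essentially the same route as the paper: apply Lemma~\ref{lem:JK2011} with $\theta=\theta_G^*$ and $F=f_G$, verify that $\theta_G^*(p)$ is a regular value (restricting the domain to avoid the apex when $\M=\C$), and then use Lemma~\ref{lem:matrix} to evaluate $\rank dF_G^*|_p$. Your treatment is in fact slightly more careful than the paper's in observing that $\N$ is only a union of connected components of $(\theta_G^*)^{-1}(0)$ rather than the full preimage, but since the rank computation is local this does not affect the argument.
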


\begin{proof}
Consider the map $F_G^*=(f_G,\theta^*_G)$ defined in Section \ref{sec:gen}.

We first consider the case when $\M\in \{\S,\Y,\E\}$. We can use a similar argument to that given after Theorem \ref{thm:surfacerank} to show that
$\rank d\theta_G^*|_p=n+\ell$ and hence $\theta_G^*(p)$ is a regular value of $\theta_G^*$.
Since $\N=(\theta_G^*)^{-1}(\theta_G^*(p))$, we may now use Lemma \ref{lem:JK2011} to deduce that,
$$\rank df_{[G]}|_p = \rank dF^*_{G}|_p - \rank d\theta_G^*|_p= m+n+\ell - (n+\ell)=m.$$

We proceed similarly in the case when $\M=\C$, but we have to restrict the domain of $F_G^*$ to an open neighbourhood 
of $p$ which contains no critical points of $F_G^*$, i.e. contains no points $p$ with $p(v)=(0,0,0)$ for some $v\in V$,
otherwise $p$ is no longer a regular value of $\theta_G^*$. (We have $\theta_G^*(p)=(0,0,\ldots,0,x_1)=\theta_G^*(q)$ for $q=(x_1,0,x_1,0,0,\ldots,0)$.)
\end{proof}

\begin{lem}\label{lem:gen->edgegen}
Let $(G,p)$ be an isostatic quasi-generic framework on $\M$.
Then $$\td[\bQ(f_G(p)):\bQ]=m.$$
\end{lem}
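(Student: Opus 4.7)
The plan is to show that the $m$ entries of $f_G(p)$ are algebraically independent over $\bQ$. Since $f_G$ is invariant under $\M$-isometries and $(G,p)$ is quasi-generic, there is a generic framework $(G,q)$ on $\M$ with $f_G(p)=f_G(q)$, hence $\bQ(f_G(p))=\bQ(f_G(q))$. I may therefore assume from the outset that $(G,p)$ itself is generic, i.e.\ $\td[\bQ(p):\bQ]=2n$. Each of the four surfaces $\M$ is cut out by a single quadratic that is easily checked to be irreducible over $\bC$, so $\M^n$ is an irreducible affine variety over $\bQ$ of dimension $2n$, and Corollary \ref{cor:realvar} identifies $p$ as a generic point of $\M^n$ in the algebraic sense of Section \ref{sec:alggeom}.

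Next I would perform a rank computation for $f_G$ restricted to $\M^n$. Isostaticity combined with Theorem \ref{thm:surfacerank} gives $m=2n-\ell$ and $\rank dF_G|_p=3n-\ell$. The block form of $d\theta_G|_p$ displayed in Section \ref{sec:gen} shows that $\rank d\theta_G|_p=n$ (in the cone case one uses the genericity of $p$ to exclude $p(v_i)=(0,0,0)$), so $\theta_G(p)=0$ is a regular value of $\theta_G$ on a suitable open neighbourhood of $p$. Applying Lemma \ref{lem:JK2011} with $X=\M^n$, $F=F_G$, $\theta=\theta_G$ then yields
\[
\rank d(f_G|_{\M^n})|_p \;=\; \rank dF_G|_p-\rank d\theta_G|_p \;=\; (3n-\ell)-n \;=\; m.
\]
As the target $\bR^m$ has the same dimension as this rank, $f_G|_{\M^n}$ is a submersion onto $\bR^m$ at $p$, so its image contains a Euclidean open neighbourhood of $f_G(p)$ in $\bR^m$.

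The algebraic conclusion then follows routinely. Given $h\in\bQ[Y_1,\dots,Y_m]$ with $h(f_G(p))=0$, the pulled-back polynomial $h'(X)=h(f_G(X))\in\bQ[X_1,\dots,X_{3n}]$ vanishes at $p$. Because $p$ is a generic point of the irreducible $\bQ$-variety $\M^n$, Lemma \ref{lem:genericvariety}(b) forces $h'$ to vanish identically on $\M^n$; hence $h$ vanishes on $f_G(\M^n)$, which contains a Euclidean open subset of $\bR^m$, and therefore $h\equiv 0$. Thus the coordinates of $f_G(p)$ are algebraically independent over $\bQ$, yielding $\td[\bQ(f_G(p)):\bQ]=m$. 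The main subtlety is bridging the two languages---using the real-analytic submersion statement to deduce a Zariski-density statement in $\bC^m$---but this is immediate once the image is known to contain a real open set.
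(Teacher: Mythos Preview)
Your proof is correct and follows the same skeleton as the paper's: reduce to a situation where $p$ is a generic point of an irreducible $\bQ$-variety, compute that the relevant restriction of $f_G$ has rank $m$ at $p$ via Lemma~\ref{lem:JK2011}, deduce that the image contains an open subset of $\bR^m$, and conclude by the standard ``polynomial vanishing on an open set'' argument.

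The one genuine difference is the choice of ambient manifold. The paper first passes to standard position and works on the $(2n-\ell)$-dimensional manifold $\N$, invoking Lemma~\ref{lem:regpoint} (which in turn rests on the augmented map $F_G^*$ and Lemma~\ref{lem:matrix}); since $\dim\N=m$, the Inverse Function Theorem then gives a local diffeomorphism. You instead stay on the full $2n$-dimensional manifold $\M^n$, reduce quasi-generic to generic via the isometry-invariance of $f_G$, and apply Lemma~\ref{lem:JK2011} directly with $\theta=\theta_G$; since the target has dimension $m$ you get a submersion rather than a local diffeomorphism, which is just as good for producing an open image. Your route is slightly more economical here because it bypasses the standard-position apparatus, though the paper's detour through Lemma~\ref{lem:regpoint} is not wasted effort, as that lemma is needed again later. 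One small notational slip: in your invocation of Lemma~\ref{lem:JK2011} the map you call $F$ should be $f_G$, not $F_G$; the combined map $H=(F,\theta)$ is then $F_G$, and indeed your displayed computation $\rank dF_G|_p-\rank d\theta_G|_p$ is exactly $\rank dH|_p-\rank d\theta|_p$ as the lemma requires.
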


\begin{proof}
Without loss of generality we may assume $(G,p)$ is in standard position.
Let $f_{[G]}(p)=(\beta_1,\dots,\beta_{m})=\beta$. Suppose $g(\beta)=0$ for some polynomial $g$ with rational coefficients. Then $g\circ f_{[G]}(p)=0$.
Since $(G,p)$ is quasi-generic and $g\circ f_{[G]}$ is a polynomial with rational coefficients, Corollary \ref{cor:realvar} implies that $g\circ f_{[G]}(q)=0$ for all $q\in \M^{n}$. In particular $g\circ f_{[G]}(q)=0$ for all $q\in \N$.
By Lemma \ref{lem:regpoint}, $\rank df_{[G]}|_q=m$.
Since $(G,p)$ is isostatic, $\N$ is $m$-dimensional and hence the Inverse Function Theorem implies that $f_{[G]}$ maps some open neighbourhood $U$ of $p$ in $\N$ diffeomorphically onto some neighbourhood $f_{[G]}(U)$ of $f_{[G]}(p)$ in $\bR^m$. Then $g(y)=0$ for all $y$ in the open subset $f_{[G]}(U)$
of $\bR^m$. This implies that $g$ must be the zero polynomial and hence
 $\{\beta_1,\dots,\beta_{m}\}$ is algebraically independent over $\bQ$.
\end{proof}

\begin{lem}\label{lem:q-g<=>}
A framework $(G,p)$ on $\M$ is quasi-generic if and only if $(G,p)$ is $\M$-congruent to a framework $(G,q)$ in standard position with $\td[\bQ(q):\bQ]=2n-\ell$.
\end{lem}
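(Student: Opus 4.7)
My plan is to exploit the isometry group $\Gamma$ of $\M$, a $\bQ$-algebraic group of dimension $\ell$, and compare transcendence degrees via the action $\Gamma\times\N\to\M^n$, $(\sigma,q)\mapsto\sigma(q)$. The case $\M=\E$ is trivial since $\ell=0$ and every framework is in standard position, so the statement reduces to the definition of quasi-generic. I therefore concentrate on $\M\in\{\S,\Y,\C\}$.

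The key preliminary observation (worth a short lemma) is: for any $r\in\M^n$ (excluding $r(v)=(0,0,0)$ when $\M=\C$) there are only finitely many $\sigma\in\Gamma$ with $\sigma(r)\in\N$, and the parameters of any such $\sigma$ are rational functions over $\bQ$ in the coordinates of $r(v_1)$, and additionally $r(v_2)$ when $\M=\S$. For instance on $\Y$ the unique rotation-plus-translation sending $r(v_1)=(x_1,y_1,z_1)$ to $(0,1,0)$ has matrix entries $\pm x_1,\pm y_1$ and translation $-z_1$; analogous explicit formulae hold on $\S$ and $\C$. Setting $q=\sigma(r)$, the entries of $q$ lie in $\bQ(r)$, the entries of $r=\sigma^{-1}(q)$ lie in $\bQ(q,\sigma)$, and $\sigma$ itself is algebraic of finite degree over $\bQ(r)$. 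Consequently $\td[\bQ(r):\bQ]=\td[\bQ(q,\sigma):\bQ]$.

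For the forward direction, pick a generic $(G,p')$ that is $\M$-congruent to $(G,p)$, so $\td[\bQ(p'):\bQ]=2n$. Let $\sigma\in\Gamma$ send $p'$ to standard position, yielding $q=\sigma(p')\in\N$; then $(G,p)$ is $\M$-congruent to $(G,q)$. Applying Lemma \ref{lem:genericvariety}(c) to $\N$ and to $\Gamma$ gives $\td[\bQ(q):\bQ]\le 2n-\ell$ and $\td[\bQ(\sigma):\bQ]\le\ell$, whence
\[
2n\;=\;\td[\bQ(p'):\bQ]\;=\;\td[\bQ(q,\sigma):\bQ]\;\le\;\td[\bQ(q):\bQ]+\td[\bQ(\sigma):\bQ]\;\le\;(2n-\ell)+\ell\;=\;2n.
\]
Equality throughout forces $\td[\bQ(q):\bQ]=2n-\ell$. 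For the backward direction, suppose $(G,p)$ is $\M$-congruent to $(G,q)\in\N$ with $\td[\bQ(q):\bQ]=2n-\ell$. Since $\Gamma$ has real dimension $\ell$ and $\bR$ contains elements transcendental over the countable field $\bQ(q)$, I can choose $\sigma\in\Gamma$ with $\td[\bQ(q)(\sigma):\bQ(q)]=\ell$. Setting $p'=\sigma(q)$ and applying the tower law,
\[
\td[\bQ(p'):\bQ]\;=\;\td[\bQ(q,\sigma):\bQ]\;=\;\td[\bQ(q)(\sigma):\bQ(q)]+\td[\bQ(q):\bQ]\;=\;\ell+(2n-\ell)\;=\;2n,
\]
so $(G,p')$ is generic and $(G,p)$ is quasi-generic.

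The main obstacle is the preliminary identity $\td[\bQ(r):\bQ]=\td[\bQ(q,\sigma):\bQ]$: one must verify case by case that the standard-position-reducing isometry is uniquely determined by $r$ up to finite ambiguity and that its parameters are $\bQ$-rational in the first one or two vertex coordinates of $r$. Once this is settled, the transcendence-degree accounting above is routine bookkeeping.
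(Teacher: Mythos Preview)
Your argument is correct and follows a genuinely different route from the paper's. The paper proves the forward direction by reducing (without loss of generality) to the case where $G$ is isostatic and then invoking Lemma~\ref{lem:gen->edgegen}, which uses the rigidity map $f_G$ together with the Inverse Function Theorem to show $\td[\bQ(f_G(q)):\bQ]=2n-\ell$; since $\bQ(f_G(q))\subseteq\bQ(q)$ this gives the lower bound, and the upper bound comes from $q\in\N$. For the backward direction the paper writes out the isometry explicitly on each surface, choosing a rotation angle $\theta$ and a translation $t$ so that concrete lists of coordinates are algebraically independent over $\bQ$.

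Your approach bypasses the rigidity map entirely. You work directly with the action $\Gamma\times\N\to\M^n$ and the key identity $\td[\bQ(r):\bQ]=\td[\bQ(q,\sigma):\bQ]$, which follows from your preliminary observation that any standard-position-reducing isometry has parameters algebraic in the first one or two vertex positions. Once that is checked (and it really is a short case analysis, as you indicate), both directions become pure transcendence-degree bookkeeping via the tower law and the bounds $\td[\bQ(q):\bQ]\le\dim\N=2n-\ell$, $\td[\bQ(q)(\sigma):\bQ(q)]\le\dim\Gamma=\ell$. This is more uniform across the four surfaces, avoids the Inverse Function Theorem, and does not require the auxiliary assumption that $G$ is isostatic. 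The paper's route, on the other hand, ties the lemma to the edge-length map $f_G$, which is the central object in the subsequent sections; so while less economical here, it reinforces machinery used later. One small point worth making explicit in your write-up: in the displayed inequality $\td[\bQ(q,\sigma):\bQ]\le\td[\bQ(q):\bQ]+\td[\bQ(\sigma):\bQ]$, the second summand is really bounding $\td[\bQ(q)(\sigma):\bQ(q)]$, which is at most $\ell$ because the $\bQ$-defining equations of $\Gamma$ remain valid over $\bQ(q)$; this is routine but deserves a sentence.
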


\begin{proof}
Suppose that $(G,p)$ is quasi-generic. Then $(G,p)$ is $\M$-congruent to a framework $(G,q)$ in standard position.
Since the definition of quasi-generic depends only on $p$, we may assume, without loss of generality, that
$G$ is isostatic. By Lemma \ref{lem:gen->edgegen}, $\td[\bQ(f_{G}(p)):\bQ]=\td[\bQ(f_{G}(q)):\bQ]=2n-\ell$.
Let $K$ and $L$ be the algebraic closures of $\bQ(f_{G}(q))$ and $\bQ(q)$ respectively. Since $K\subseteq L$ we have
$$\td[\bQ(q):\bQ]=\td[L:\bQ]\geq \td[K:\bQ]= 2n-\ell.$$
Since $(G,q)$ is in standard position on $\M$, we also have $\td[\bQ(q):\bQ]\leq 2n-\ell$. Hence $\td[\bQ(q):\bQ]= 2n-\ell$.

Now suppose $(G,p)$ is $\M$-congruent to a framework $(G,q)$ in standard position with $\td[\bQ(q):\bQ]=2n-\ell$. Let $q=(q(v_1),x_2,y_2,z_2,\dots,x_n,y_n,z_n)$.
When $\M=\E$, we have $\ell=0$ and $(G,q)$ is generic so we are done.

Suppose $\M=\Y$. As $\td[\bR:\bQ]=\infty$ we may choose $\theta$ such that $\{\sin \theta,x_2,\dots,x_n,\break z_2,\dots,z_n\}$ is algebraically independent over $\bQ$. Apply the rotation
\[ \begin{bmatrix} \cos \theta & -\sin \theta & 0\\ \sin \theta & \cos \theta & 0\\ 0 & 0 & 1 \end{bmatrix} \]
to each of the vertices of $(G,q)$ to achieve the equivalent framework $(G,q')$. Then
$$q'=(-\sin \theta, \cos \theta,0,x_2\cos \theta-y_2\sin \theta,x_2\sin\theta+y_2\cos\theta,z_2,x_3\cos \theta-y_3\sin \theta,$$$$x_3\sin\theta+y_3\cos\theta,z_3,\newline
\dots,x_n\cos \theta-y_n\sin \theta,x_n\sin\theta+y_n\cos\theta,z_n).$$
As $\{\sin\theta,x_2,\dots,x_n,z_2,\dots,z_n\}$ is algebraically independent over $\bQ$, $\M=\Y$ and $x_i^2+y_i^2=1$ for $2\leq i \leq n$, $\{\sin \theta,x_2\cos\theta-y_2\sin\theta,\dots,x_n\cos\theta-y_n\sin\theta,z_1,\dots,z_n\}$ is algebraically independent over $\bQ$.
Next choose $t\in \bR$ such that $S=\{\sin\theta,x_2\cos \theta-y_2\sin \theta,\dots,x_n\cos \theta-y_n\sin \theta,z_1+t,\dots,z_n+t\}$ is algebraically independent over $\bQ$. Translating $(G,q')$ parallel to the $z$-axis by $t$ we reach the equivalent framework $(G,q'')$ where $q''=(-\sin\theta,\cos\theta, t, x_2\cos \theta-y_2\sin \theta,x_2\sin\theta+y_2\cos\theta,z_2+t,\dots,x_n\cos \theta-y_n\sin \theta,x_n\sin\theta+y_n\cos\theta,z_n+t)$. As $S$ is algebraically independent over $\bQ$ we have that $\td[\bQ(q''):\bQ]=2n$. Thus $q''$ is generic on $\Y$. Since $(G,p)$ is $\M$-congruent to $(G,q'')$, $(G,p)$ is quasi-generic.

The remaining cases, when $\M=\S$ or $\M=\C$, follow by a similar argument.
\end{proof}

\begin{lem}\label{lem:quasi=edgegen}
Let $(G,p)$ be an isostatic framework on $\M$. Then $(G,p)$ is quasi-generic if and only if $\td[\bQ(f_G(p)):\bQ]=2n-\ell$.
\end{lem}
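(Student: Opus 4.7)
The forward direction is almost immediate from the preceding lemmas. Since $(G,p)$ is isostatic on $\M$, Theorem \ref{thm:surfacerank} gives $m = 2n-\ell$. If we further assume $(G,p)$ is quasi-generic, Lemma \ref{lem:gen->edgegen} yields $\td[\bQ(f_G(p)):\bQ]=m=2n-\ell$.

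For the converse, assume $\td[\bQ(f_G(p)):\bQ]=2n-\ell$. The plan is to reduce to the characterisation of quasi-genericity established in Lemma \ref{lem:q-g<=>}. First I would replace $(G,p)$ by an $\M$-congruent framework $(G,q)$ in standard position; this is possible (when $\M=\C$, after verifying $p(v)\neq (0,0,0)$ for all $v$, which follows from the fact that a generic point on $\C$ has all coordinates nonzero and being isostatic prevents the degenerate realisation) because isometries of $\M$ act transitively enough on $\M^n$ to achieve the normalisations listed in Section \ref{sec:gen}. Since $\M$-congruence preserves all pairwise distances, $f_G(q)=f_G(p)$, and therefore $\td[\bQ(f_G(q)):\bQ]=2n-\ell$.

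Next, since each coordinate of $f_G(q)$ is a polynomial over $\bQ$ in the coordinates of $q$, we have the tower $\bQ\subseteq \bQ(f_G(q))\subseteq \bQ(q)$, so
\[
\td[\bQ(q):\bQ]\geq \td[\bQ(f_G(q)):\bQ]=2n-\ell.
\]
On the other hand, the fact that $q$ lies in standard position on $\M$ forces the reverse inequality $\td[\bQ(q):\bQ]\leq 2n-\ell$: in each of the four cases the standard position constraints, together with the defining equation of $\M$ applied at each vertex, cut the potential $3n$ coordinates down to at most $2n-\ell$ algebraically independent values (this is exactly the book-keeping carried out in the proof of Lemma \ref{lem:q-g<=>}). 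Combining the two inequalities gives $\td[\bQ(q):\bQ]=2n-\ell$, and Lemma \ref{lem:q-g<=>} then concludes that $(G,p)$ is quasi-generic.

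The main routine point to check carefully is the inequality $\td[\bQ(q):\bQ]\leq 2n-\ell$ in standard position, but this is a direct case-by-case verification using the defining polynomial of $\M$ and the coordinates fixed by the standard position convention; no serious obstacle arises. The conceptual content of the argument is the observation that knowing the edge lengths are as algebraically independent as possible already forces the vertex coordinates (after normalising to standard position) to be as algebraically independent as possible.
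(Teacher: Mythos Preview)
Your argument is correct and follows the same route as the paper: reduce to standard position, sandwich $\td[\bQ(q):\bQ]$ between $\td[\bQ(f_G(q)):\bQ]$ and the obvious upper bound $2n-\ell$, and invoke Lemma~\ref{lem:q-g<=>}. The paper does this in three lines after declaring ``we may assume $(G,p)$ is in standard position''.

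One small point: your parenthetical justification for $p(v)\neq(0,0,0)$ when $\M=\C$ is circular (you cannot appeal to genericity of $p$, since that is what you are proving) and ``isostatic prevents degeneracy'' is not a real argument. The clean way to handle it, if you want to address it at all, is to note that $p(v_i)=(0,0,0)$ would force $\td[\bQ(p):\bQ]\leq 2(n-1)<2n-1=\td[\bQ(f_G(p)):\bQ]$, contradicting $\bQ(f_G(p))\subseteq\bQ(p)$. In any case this check is not actually needed to move to standard position on $\C$ (only $x_1=0$ is required), so the parenthetical can simply be dropped.
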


\begin{proof}
We may assume that $(G,p)$ is in standard position on $\M$.
If $(G,p)$ is quasi-generic, then $\td[\bQ(f_G(p)):\bQ]=2n-\ell$ by Lemma \ref{lem:gen->edgegen}.

Suppose on the other hand that $\td[\bQ(f_G(p)):\bQ]=2n-\ell$.
Since $(G,p)$ is in standard position on $\M$ we have $\td[\bQ(p):\bQ]\leq 2n-\ell$.
Since $\bQ(f_G(p))\subseteq\bQ(p)$ and $\td[\bQ(f_G(p)):\bQ]=2n-\ell$, we must have $\td[\bQ(p):\bQ]= 2n-\ell$.
Lemma \ref{lem:q-g<=>} now tells us that $(G,p)$ is quasi-generic.
\end{proof}

\section{Regular Maps}
\label{sec:regular}

Suppose $(G,p)$ is in standard position on $\M$.
Lemma \ref{lem:regpoint} implies that $p$ is a regular point of $f_{[G]}$ when $(G,p)$ is independent.
We will use the following fundamental theorems to prove that $f_{[G]}(p)$ is a regular value of $f_{[G]}$ when $(G,p)$ is quasi-generic.

Recall that a subset $S$ of $\bR^n$ is {\em semi-algebraic over $\bQ$} if it
can be expressed as a finite union of sets of the form
$$\{x\in \bR^n : \mbox{$P_i(x)=0$ for $1\leq i\leq s$  and $Q_j(x)>0$ for $1\leq j\leq t$} \},$$
where $P_i\in \bQ[X_1,\ldots,X_n]$ for $1\leq i\leq s$, and
$Q_j\in \bQ[X_1,\ldots,X_n]$ for $1\leq j\leq t$.

\begin{thm}[Tarski-Seidenberg \cite{Sei}, \cite{Tar}]\label{thm:T-S}
Let $S\subset \bR^{n+k}$ be semi-algebraic over $\bQ$ and $\pi:\bR^{n+k}\rightarrow \bR^n$ be the projection onto the first $n$ coordinates. Then $\pi(S)$ is semi-algebraic over $\bQ$.
\end{thm}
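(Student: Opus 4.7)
The plan is to prove the theorem by induction on $k$. Since $\pi$ factors as a composition of $k$ coordinate projections, it suffices to establish the case $k=1$. Because semi-algebraic sets are, by definition, finite unions of basic semi-algebraic sets and projection commutes with unions, I may further assume that
$$S=\{(x,y)\in \bR^{n+1}\,:\, P_i(x,y)=0 \text{ for }1\leq i\leq s,\ Q_j(x,y)>0 \text{ for }1\leq j\leq t\}$$
for some $P_i, Q_j\in \bQ[X_1,\ldots,X_n,Y]$.

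The core task is then to express, as a semi-algebraic condition on $x$ alone, the statement ``there exists $y\in\bR$ with $P_i(x,y)=0$ for all $i$ and $Q_j(x,y)>0$ for all $j$.'' When $s\geq 1$, I would collapse the equalities by replacing the system with the single equation $P:=\sum_i P_i^2=0$, reducing to one polynomial equation together with finitely many strict polynomial inequalities. The heart of the argument is then a real root counting procedure in the spirit of Sturm's theorem, as generalised by Sylvester, Tarski and Habicht: the number of real roots of $P(x,Y)$ at which a prescribed sign pattern $(\operatorname{sgn}Q_1,\ldots,\operatorname{sgn}Q_t)$ is realised equals the sign variation of a finite sequence built from subresultants of $P$, $P'$ and the products $PQ_j$. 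Crucially, the entries of this sequence are themselves polynomials in $x$ with rational coefficients, so the condition ``sign variation $\geq 1$'' becomes a Boolean combination of polynomial sign conditions on $x$.

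To apply this procedure uniformly in $x$, I would partition $\bR^n$ into finitely many semi-algebraic pieces on which $\deg_Y P$ and each $\deg_Y Q_j$ are constant; such a partition is defined by sign conditions on the leading $Y$-coefficients and is therefore itself semi-algebraic over $\bQ$. On each piece the subresultant sequence has a constant shape, and the existence of a real root of $P$ realising some sign pattern with all $Q_j>0$ becomes precisely the desired semi-algebraic condition on $x$. Taking the union over the finitely many pieces and the finitely many admissible sign patterns exhibits $\pi(S)$ as semi-algebraic over $\bQ$, completing the inductive step. The purely open case $s=0$ is handled by the same machinery applied to $\prod_j Q_j$, reading off the signs of the $Q_j$ on each of the finitely many intervals into which its real roots decompose $\bR$.

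The main obstacle I anticipate is the bookkeeping at the boundaries of the partition: the leading coefficient of $P(x,Y)$ in $Y$ may vanish on a proper subvariety, and on that locus the analysis must restart with a polynomial of strictly smaller $Y$-degree. Handling this cleanly requires either an explicit recursion on $\deg_Y P$ (as in H\"ormander's proof) or the use of \emph{signed} subresultant sequences whose specialisation behaviour is controlled a priori. Either approach terminates because $\deg_Y P$ strictly decreases at each step, but verifying that the semi-algebraic description remains valid and expressible over $\bQ$ across every stratum is the delicate part of the argument.
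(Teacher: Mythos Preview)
The paper does not prove this theorem at all: it is stated as a classical result with references to Tarski and Seidenberg and is then used as a black box in the proof of Lemma~\ref{lem:regval}. So there is nothing to compare your proposal against.

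That said, your sketch follows one of the standard modern routes to Tarski--Seidenberg (reduction to $k=1$, then real-root counting via signed subresultant/Sturm--Habicht sequences with a stratification by the vanishing of leading $Y$-coefficients), and the obstacle you flag---controlling specialisation when the $Y$-degree drops---is exactly the delicate point in that approach. As a proof outline it is sound, though turning it into a complete argument requires spelling out the subresultant machinery carefully; for the purposes of this paper, citing the result (as the authors do) is entirely appropriate.
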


\begin{thm}[Sard, see {\cite[Page 16]{Mil}}]\label{thm:sard}
Let $f:U\rightarrow \bR^n$ be a smooth map defined on an open subset $U$ of $\bR^m$ and $C$ be the set of critical points of $f$. Then $f(C)$ has Lebesque measure zero in $\bR^n$.
\end{thm}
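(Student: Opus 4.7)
This is the classical theorem of Sard; I would follow the stratified proof in Milnor's book, which is the cited reference. The plan is to induct on the dimension $m$ of the domain, with base case $m=0$ trivial. For the inductive step, filter the critical set by setting, for each $i\geq 1$, $C_i \subseteq U$ to be the points at which every partial derivative of $f$ of order at most $i$ vanishes, so that $C \supseteq C_1 \supseteq C_2 \supseteq \cdots$. It then suffices to show that each of $f(C\setminus C_1)$, $f(C_i\setminus C_{i+1})$ for $i\geq 1$, and $f(C_k)$ for some $k$ with $(k+1)n>m$, has measure zero.

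For $f(C\setminus C_1)$ the case $n=1$ is vacuous, since then $C=C_1$. For $n\geq 2$ I would note that at any $x\in C\setminus C_1$ some first partial of $f$ is non-zero; after permuting coordinates, change variables locally via $h(x)=(f_1(x),x_2,\ldots,x_m)$, a local diffeomorphism by the inverse function theorem. In the new coordinates $f$ takes the form $(t,u)\mapsto(t,\,g_t(u))$, so the critical points of $f$ in the slice $\{t=t_0\}$ are exactly the critical points of $g_{t_0}:\bR^{m-1}\to\bR^{n-1}$. The inductive hypothesis puts their image in a measure-zero subset of $\bR^{n-1}$, and Fubini (applied to a countable cover of $U$ by compact pieces) gives the conclusion.

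For $f(C_i\setminus C_{i+1})$ with $i\geq 1$ I would pick, at each point, a non-vanishing $(i+1)$-th partial of $f$, expressed as $\partial w/\partial x_j$ for some $i$-th partial $w$ that vanishes on $C_i$. Then $\{w=0\}$ is locally a smooth $(m-1)$-dimensional submanifold containing $C_i\setminus C_{i+1}$, and each such point is a critical point of the restriction $f|_{\{w=0\}}$; the inductive hypothesis then bounds its image.

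For $f(C_k)$ with $(k+1)n>m$ I would use Taylor's theorem to obtain $|f(y)-f(x)|\leq M\delta^{k+1}$ whenever $x\in C_k$ and $y$ lies in a cube of side $\delta$ centred at $x$ inside a fixed compact subset of $U$. Covering that compact set by $O(\delta^{-m})$ such cubes, the total $n$-dimensional measure of $f(C_k)$ is at most a constant times $\delta^{(k+1)n-m}$, which tends to zero. The main technical obstacles are the Fubini argument in the first step (reducing measure-zero in slices to measure-zero globally, which forces one to work with a countable union of compact pieces on which Fubini is valid) and bookkeeping the simultaneous induction on $(m,n)$ so that every appeal to the inductive hypothesis is legitimate.
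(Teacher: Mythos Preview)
Your outline is a faithful sketch of Milnor's proof of Sard's theorem and is correct as a proof strategy. However, the paper does not give its own proof of this statement: it is stated as Theorem~\ref{thm:sard} with a citation to \cite[Page 16]{Mil} and then used as a black box in the proof of Lemma~\ref{lem:regval}. So there is no proof in the paper to compare against; your proposal simply supplies the standard argument that the paper chose to cite rather than reproduce.
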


We also need the following elementary result, see for example \cite[Lemma 3.3]{J&K}.

\begin{lem}\label{lem:JK3.3}
Let $M$ be a smooth manifold and let $f:M\rightarrow \bR^n$ be a smooth map. Let $x\in M$ and choose an open neighbourhood $U$ of $x$ on $M$ such that $U$ is diffeomorphic to $\bR^m$.
Let $g$ be the restriction of $f$ to $U$ and let $x$ be a regular point of $g$. Suppose that the rank of $dg|_x$ is $n$. Then there exists an open neighbourhood $W \subseteq U$ of $x$ such that $g(W)$ is an open neighbourhood of $g(x)$ in $\bR^n$.
\end{lem}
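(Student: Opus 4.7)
The plan is to reduce the statement to the standard submersion theorem via the inverse function theorem. After identifying $U$ with $\bR^m$ through the given diffeomorphism, we may treat $g$ as a smooth map $\bR^m\to\bR^n$ with $x$ an interior point of the domain and $\rank dg|_x=n$; in particular $m\geq n$. Pick $n$ columns of the Jacobian $dg|_x$ that are linearly independent, and after permuting the coordinates of $\bR^m$ (which is a global diffeomorphism) assume these are the first $n$ columns.

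Next I would introduce the auxiliary map $\Phi:\bR^m\to\bR^m$ defined by
$$\Phi(y_1,\ldots,y_m)=(g(y_1,\ldots,y_m),\,y_{n+1},\ldots,y_m).$$
Its derivative $d\Phi|_x$ is block triangular, with the chosen invertible $n\times n$ block in the top-left corner and the identity $I_{m-n}$ in the bottom-right, so $d\Phi|_x$ is invertible. The inverse function theorem then yields an open neighbourhood $W\subseteq U$ of $x$ which is mapped diffeomorphically by $\Phi$ onto an open subset $\Phi(W)\subseteq\bR^m$.

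Finally, let $\pi:\bR^m\to\bR^n$ be the projection onto the first $n$ coordinates. Since $\pi$ is an open map and $g=\pi\circ\Phi$ by construction, the set $g(W)=\pi(\Phi(W))$ is open in $\bR^n$ and contains $g(x)=\pi(\Phi(x))$, which gives the desired conclusion.

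The argument is entirely routine and no genuine obstacle arises; the only point requiring a little attention is verifying that the coordinate permutation used to bring the invertible block to the top-left is itself a diffeomorphism of $\bR^m$, so that the resulting $W$ really is a neighbourhood of $x$ in the original coordinates on $M$ (equivalently, on $U$).
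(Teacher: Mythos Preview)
Your argument is correct and is precisely the standard reduction to the inverse function theorem via the auxiliary map $\Phi=(g,y_{n+1},\ldots,y_m)$; the only cosmetic remark is that the coordinate permutation is a linear isomorphism, hence trivially a diffeomorphism, so no real care is needed there. The paper does not actually supply a proof of this lemma at all: it simply states it as an elementary result and refers to \cite[Lemma 3.3]{J&K}, so there is nothing further to compare.
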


\begin{lem}\label{lem:regval}
Let $(G,p_0)$ be a quasi-generic framework in standard position on $\M$. Then $f_{[G]}(p_0)$ is a regular value of $f_{[G]}$.
\end{lem}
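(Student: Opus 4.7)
The plan is to combine Sard's theorem, the Tarski--Seidenberg theorem, and the quasi-genericity of $p_0$ to show that $f_{[G]}(p_0)$ is a generic point of the Zariski closure of the image of $f_{[G]}$; the conclusion then follows because a generic point of an irreducible variety lies outside every proper subvariety.

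Set $r=\max_{p\in\N}\rank df_{[G]}|_p$. First I would observe that $p_0$ itself is a regular point of $f_{[G]}$: by Lemma \ref{lem:q-g<=>} we have $\td[\bQ(p_0):\bQ]=2n-\ell$, so by Corollary \ref{cor:realvar}, $p_0$ is a generic point of some irreducible $\bQ$-subvariety $W\subseteq\bC^{3n}$ of dimension $2n-\ell$ (an irreducible component of the Zariski closure of $\N$); the condition $\rank df_{[G]}|_p\geq r$ is a Zariski-open condition over $\bQ$ (non-vanishing of some $r\times r$ minor of the Jacobian) that holds on a nonempty subset of $W$, hence at the generic point $p_0$. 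Let $Y\subseteq\bC^m$ be the Zariski closure over $\bQ$ of $f_G(W)$; it is an irreducible $\bQ$-subvariety of dimension $r$. The key transfer step is that $f_{[G]}(p_0)$ is then a generic point of $Y$ over $\bQ$: given $P\in\bQ[Y_1,\ldots,Y_m]$ with $P(f_G(p_0))=0$, the polynomial $P\circ f_G\in\bQ[X_1,\ldots,X_{3n}]$ vanishes at $p_0$, hence on all of $W$ by genericity, so $P$ vanishes on $f_G(W)$ and therefore on $Y$.

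It remains to show that the set $f_{[G]}(C)$ of critical values, where $C=\{p\in\N:\rank df_{[G]}|_p<r\}$, is contained in a proper $\bQ$-subvariety of $Y$. The set $C$ is semi-algebraic over $\bQ$ (defined by the vanishing of all $r\times r$ minors of the Jacobian), so $f_{[G]}(C)$ is semi-algebraic over $\bQ$ by Tarski--Seidenberg (Theorem \ref{thm:T-S}). Stratifying $\N$ by the value of $\rank df_{[G]}|_p$, each stratum $\N_i$ with $i<r$ maps under $f_{[G]}$ to a set of real dimension at most $i\leq r-1$ (by the constant rank theorem, or equivalently by a stratum-wise application of Sard, Theorem \ref{thm:sard}), whence $\dim_{\bR}f_{[G]}(C)\leq r-1$. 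The $\bQ$-Zariski closure $V$ of $f_{[G]}(C)$ in $\bC^m$ therefore has complex dimension at most $r-1<r=\dim Y$, and since $V\subseteq Y$ this gives $V\subsetneq Y$. Choosing $P\in\bQ[Y_1,\ldots,Y_m]$ vanishing on $V$ but not on $Y$, the transfer step forces $P(f_G(p_0))\neq 0$, so $f_{[G]}(p_0)\notin V\supseteq f_{[G]}(C)$, completing the argument.

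The main obstacle is the dimension bound $\dim_{\bR}f_{[G]}(C)\leq r-1$ in the case $r<m$, which occurs whenever $(G,p_0)$ is not independent on $\M$: Sard applied directly in $\bR^m$ only yields that $f_{[G]}(C)$ has Lebesgue measure zero in $\bR^m$, but this is vacuous since then $f_{[G]}(\N)$ itself has measure zero in $\bR^m$, so one must argue inside the image variety $Y$ via the rank stratification. A minor technicality is the reducibility of the Zariski closure of $\N$ when $\M=\C$ (the equation $y_1^2=z_1^2$ splits once $x_1=0$ is imposed), dealt with by taking $W$ to be the irreducible component containing $p_0$.
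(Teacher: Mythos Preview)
Your argument is correct and takes a genuinely different route from the paper.

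The paper first treats the case when $(G,p_0)$ is independent, so that $r=m$. It forms the semi-algebraic set $S'=\{p\in\N:\exists\,q\in\N,\ f_{[G]}(p)=f_{[G]}(q),\ \rank df_{[G]}|_q<r\}$ via Tarski--Seidenberg, and argues that if $p_0\in S'$ then, by genericity of $p_0$ on $\N$, the equality constraints in the relevant basic piece of $S'$ vanish identically on $\N$, so an open neighbourhood of $p_0$ in $\N$ lies in $S'$. Since $\rank df_{[G]}|_{p_0}=m$ (Lemma~\ref{lem:regpoint}), this neighbourhood maps onto an open set in $\bR^m$ consisting entirely of critical values, contradicting Sard. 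The dependent case is then dispatched in one line by passing to a maximal independent subgraph $H$: any $q$ with $f_{[G]}(q)=f_{[G]}(p_0)$ also has $f_{[H]}(q)=f_{[H]}(p_0)$, hence is a regular point of $f_{[H]}$, and maximality of $H$ forces $\rank df_{[G]}|_q=\rank df_{[H]}|_q=r$.

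Your uniform treatment through the image variety $Y$ avoids this reduction but, as you correctly flag, forces you to prove the sharper bound $\dim_{\bR} f_{[G]}(C)\leq r-1$ when $r<m$, where the naive application of Sard is vacuous. Your justification ``by the constant rank theorem'' is a little loose: the rank of $df_{[G]}$ is constant on each $\N_i$, but $\N_i$ is not open in $\N$, and the rank of the \emph{restriction} $d(f_{[G]}|_{\N_i})$ need not be constant. The cleanest patch is a projection trick: if $d:=\dim_{\bR} f_{[G]}(C)$, compose with a generic linear $\pi:\bR^m\to\bR^d$ so that $\pi(f_{[G]}(C))$ contains an open subset of $\bR^d$; since $\rank d(\pi\circ f_{[G]})\leq r-1$ on $C$, every value of $\pi\circ f_{[G]}|_C$ is critical, and Sard gives $d\leq r-1$. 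Alternatively one can bypass the real argument entirely by working on the complex critical locus $C_\bC\subset W$ and using the generic-fibre dimension formula to get $\dim\overline{f_G(C_\bC)}\leq r-1$ directly. Either way your approach goes through; the paper's reduction to the independent case simply sidesteps this refinement, trading conceptual uniformity for a shorter proof.
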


\begin{proof}
We first consider the case when $(G,p_0)$ is independent.
Let $t=\max\{\rank df_{[G]}|_p: p\in \N\}$. Let
$$S=\{(p,q)\in \N\times \N: f_{[G]}(p)=f_{[G]}(q)\text{ and } \rank df_{[G]}|_q<t\}$$
and
$$S'=\{p\in \N:\text{ there exists }q\in \N\text{ such that }(p,q)\in S\}.$$
The set $S$ is an algebraic set, so Theorem \ref{thm:T-S} implies that $S'$ is a semi-algebraic set.

Suppose that $p_0\in S'$. Then there exists a set
$$S''=\{x\in \N:P_i(x)=0\text{ and }Q_j(x)>0\text{ for }1\leq i\leq a \text{ and }1\leq j\leq b\}\subseteq S$$
that contains $p_0$ where $P_i,Q_j \in \bQ[x_1,y_1,z_1, x_2,y_2,z_2,\dots, x_n,y_n,z_n]$ and $a,b \geq 0$. Let $I=\{g \in \bQ[x_1,y_1,z_1, x_2,y_2,z_2,\dots, x_n,y_n,z_n]:g(x)=0 \mbox{ for all } x \in \N\}$, then $P_i \in I$ for all $1\leq i \leq a$.

Since $p_0$ is generic on $\N$ we have
$$S''=\{x\in \N:Q_j(x)>0\text{ for }1\leq j\leq b\}.$$
Therefore there exists a neighbourhood $W\subseteq \N$ of $p_0$ such that $W\subseteq S''$. By Lemmas \ref{lem:regpoint} and \ref{lem:JK3.3} we can choose $W'\subset W$ such that $f_{[G]}(W')$ is an open subset of $\bR^{m}$. Then each point of $f_{[G]}(W')$ is a critical value of $f_{[G]}$, contradicting Theorem \ref{thm:sard}.
Thus $p_0\not\in S'$ and hence $f_{[G]}(p_0)$ is a regular value of $f_{[G]}$.

When $(G,p_0)$ is not independent we choose a maximal subgraph $H$ such that $(H,p_0)$ is independent. Then $f_{[H]}(p_0)$ is a regular value of $f_{[H]}$ and hence $f_{[G]}(p_0)$ is a regular value of $f_{[G]}$.
\end{proof}

\begin{lem}\label{lem:smoothmanifold}
Let $(G,p)$ be a quasi-generic framework in standard position on $\M$ and $r=\rank df_{[G]}(p)$. Then $f_{[G]}^{-1}(f_{[G]}(p))$ is a $(2n-\ell-r)$-dimensional smooth manifold.
\end{lem}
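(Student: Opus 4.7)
The plan is to realize this as a direct application of the preimage theorem (Lemma \ref{lem:milnorp11lem1}) to the restricted rigidity map $f_{[G]}$, using Lemma \ref{lem:regval} to supply the hypothesis that $f_{[G]}(p)$ is a regular value.

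First I would recall from Section \ref{sec:quasigen} that $\N$ is a $(2n-\ell)$-dimensional smooth submanifold of $\M^n$, so $f_{[G]}:\N\to \bR^m$ is a smooth map between smooth manifolds with $\dim \N=2n-\ell$. Since $(G,p)$ is quasi-generic in standard position, Lemma \ref{lem:regval} applies and tells us that $f_{[G]}(p)$ is a regular value of $f_{[G]}$; in particular, $\rank df_{[G]}|_q$ is the same (equal to its maximum value) at every $q\in f_{[G]}^{-1}(f_{[G]}(p))$, so the value $r=\rank df_{[G]}|_p$ is the common rank at every point of the fibre.

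Having these two ingredients in hand, the conclusion then follows immediately from Lemma \ref{lem:milnorp11lem1}, applied with $X=\N$, $f=f_{[G]}$, $x=p$, $m=2n-\ell$ and $t=r$: the preimage $f_{[G]}^{-1}(f_{[G]}(p))$ is a smooth manifold of dimension $(2n-\ell)-r$, as required.

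There is really no substantive obstacle here; the statement is assembled from pieces already set up in Sections \ref{sec:alggeom}--\ref{sec:regular}. The only thing one has to be a little careful about is that Lemma \ref{lem:milnorp11lem1} is phrased at a single point $x$, but this is harmless because $f_{[G]}(p)$ being a regular value forces $df_{[G]}$ to have full (and hence constant) rank along the entire fibre, so the dimension formula $2n-\ell-r$ is consistent.
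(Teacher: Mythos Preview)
Your proof is correct and follows essentially the same approach as the paper: observe that $\N$ has dimension $2n-\ell$, invoke Lemma \ref{lem:regval} to get that $f_{[G]}(p)$ is a regular value, and then apply Lemma \ref{lem:milnorp11lem1}. Your extra remark about the rank being constant along the fibre is a helpful clarification, but the argument is otherwise the same.
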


\begin{proof}
The domain of $f_{[G]}$ is $\N$ which is a $(2n-\ell)$-dimensional manifold.
Lemma \ref{lem:regval} shows that $f_{[G]}(p)$ is a regular value of $f_{[G]}$.
The result now follows by applying Lemma \ref{lem:milnorp11lem1}.
\end{proof}

\section{Unique surfaces containing a given set of points}
\label{sec:unique}

We show in this section that if $G$ has sufficiently many vertices and $(G,p)$ and $(G,q)$ are congruent realisations of $G$ on $\M$, then 
they are $\M$-congruent; i.e. there is an isometry of $\M$ which maps $(G,p)$ onto $(G,q)$. 

We say that two surfaces in $\bR^3$ are {\em congruent} if there is an isometry of $\bR^3$ mapping one to the other. We first show that if we choose a set $S$ of sufficiently many generic points on $\M$, then the only surface which is congruent to $\M$ and contains $S$ is $\M$ itself.

\begin{lem}\label{lem:uniquec}
Let $(K_{4+\gamma},p)$ be generic on $\M$ where $\gamma=0$ if $\M=\S$, $\gamma=1$ if $\M=\Y$ and $\gamma = 2$ otherwise. Let $\T$ be another surface in $\bR^3$ which is congruent to $\M$ and contains $(K_{4+\gamma},p)$. Then $\T=\M$.
\end{lem}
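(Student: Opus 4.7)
My strategy is a dimension-counting argument on an incidence variety. Let $\Omega$ denote the set of surfaces in $\bR^3$ congruent to $\M$; as the orbit of $\M$ under the $6$-dimensional Euclidean group $E(3)$, it is a real algebraic variety of dimension $6-\ell$. Setting $N=4+\gamma$, I form the incidence variety
$$V=\{(p,\T)\in \M^N\times (\Omega\setminus\{\M\}) : p_i\in \T\text{ for all }i\},$$
with projection $\pi:V\to \M^N$. The lemma says exactly that a generic $p\in\M^N$ lies outside $\pi(V)$. Since $\pi(V)$ is semi-algebraic over $\bQ$ by Theorem \ref{thm:T-S}, the conclusion will follow from Corollary \ref{cor:realvar} once the Zariski closure of $\pi(V)$ has been shown to be a proper subvariety of $\M^N$, equivalently once $\pi(V)$ is shown to have semi-algebraic dimension strictly less than $\dim\M^N=2N$.

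The key geometric input is that for any $\T\in\Omega\setminus\{\M\}$ the intersection $\T\cap\M$ is a proper Zariski-closed subset of the irreducible variety $\M$, and hence has dimension at most $1$. The fibre of $V\to\Omega\setminus\{\M\}$ over such a $\T$ is $(\T\cap\M)^N$ of dimension at most $N$, yielding the estimate $\dim V\leq (6-\ell)+N$. For $\M\in\{\S,\Y,\C\}$ this quantity equals $7,9,11$ respectively, each strictly less than $\dim \M^N=2N=8,10,12$; so $\pi(V)$ has semi-algebraic dimension strictly less than $\dim\M^N$, its Zariski closure is a proper subvariety of $\M^N$, and a generic $p$ avoids it.

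The ellipsoid case $\M=\E$ is the principal technical obstacle, because $(6-\ell)+N=12=2N$ and the dimension bound is no longer strict. To close this case I would supplement the dimension count with an explicit construction. Consider the antipodal axial configuration $p^*=(\pm e_1,\pm e_2/\sqrt a,\pm e_3/\sqrt b)\in\E^N$; for any isometry $g=(R,c)$ with $g(\E)\ni p^*$, antipodal pairing gives $c^TMp_i=0$ for $i=1,2,3$ where $M=RDR^T$ and $D=\mathrm{diag}(1,a,b)$, and since $p_1,p_2,p_3$ span $\bR^3$ and $M$ is invertible this forces $c=0$. The remaining conditions $p_i^TMp_i=1$ then pin down the diagonal of $M$ to equal $(1,a,b)$, and since the eigenvalues of $M$ are also $(1,a,b)$ a short argument via sums of principal $2\times 2$ minors (the boundary case of Schur-Horn) forces $M=D$, so $R\in\mathrm{Iso}(\E)$ and $\T=\E$. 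This shows $p^*\notin \pi(V)$. The remaining task, which is the main obstacle, is to convert this one-point verification into the global statement that the Zariski closure of $\pi(V)$ is proper; I would carry this out either by an elimination-theoretic argument producing an explicit polynomial over $\bQ$ that cuts out this Zariski closure and is nonzero at $p^*$, or by combining a second-order analysis at $p^*$ (to handle the non-transversality of the incidence system in the antisymmetric directions at the highly symmetric configuration) with a compactness argument (any $\T\in\Omega$ meeting $\E$ has centre within bounded distance of $\E$) to control solutions far from $\mathrm{Iso}(\E)$ and thereby deduce that $\pi(V)$ fails to be Zariski-dense in $\E^N$.
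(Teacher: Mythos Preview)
Your incidence-variety argument is the same dimension count as the paper's, packaged globally rather than fibre-by-fibre. The paper fixes a single $\T\neq\M$, lists parameters $c_1,\ldots,c_k$ for it (the centre, and for $\Y,\C,\E$ an axis direction), sets $\bK=\bQ(c_1,\ldots,c_k)$, and applies Lemma~\ref{lem:genericvariety}(c): each $p(v_i)$ lies in an irreducible component $M_i$ of the complexified intersection $Z_1\cap Z_2$, a proper subvariety of the complexified $\M$ and hence of dimension at most $1$, so $p$ lies in $M_1\times\cdots\times M_N$ and $\td[\bK(p):\bK]\le N$. Thus $\td[\bQ(p):\bQ]\le\td[\bK(p):\bQ]\le k+N$, contradicting $\td[\bQ(p):\bQ]=2N$ whenever $k<N$. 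Your quantity $6-\ell$ is exactly this $k$, so for $\S,\Y,\C$ the two arguments coincide and yours is correct. The paper's formulation has the mild advantage of avoiding the global incidence variety, Tarski--Seidenberg, and facts about dimensions of semi-algebraic images: one gets the transcendence-degree inequality directly from Lemma~\ref{lem:genericvariety}(c).

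For $\E$ your proof has a genuine gap, as you acknowledge. Exhibiting a single configuration $p^*\notin\pi(V)$ does \emph{not} show that the Zariski closure of $\pi(V)$ is proper in $\E^6$: a semi-algebraic subset of an irreducible real variety can omit a point, or even a nonempty Euclidean-open set, and still be Zariski-dense. Your two suggested repairs (explicit elimination, or a second-order/compactness argument) are programmes rather than proofs, and neither is obviously short. The paper handles this case by the same transcendence-degree scheme, asserting that $\T$ is parametrised by its centre $(c_1,c_2,c_3)$ together with a direction $(c_4,c_5,1)$ for the longest radius, i.e.\ by five parameters, which would give $k+N=11<12$ and close the argument. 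You should compare with that; but you may also wish to scrutinise it, since a congruent ellipsoid with three distinct semi-axes is a priori determined only after one also fixes the rotation about that longest axis, which would bring $k$ back to $6$ and leave exactly the gap you identified.
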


\begin{proof}
First suppose $\M=\Y$ and hence $\gamma=1$. Then $\T$ is a congruent cylinder containing $(K_{5},p)$. Assume the axis of $\T$ has a non-zero $z$-component and let $(c_1,c_2,0)$ be the point where it crosses the $xy$-plane. Let $(c_3,c_4,1)$ be a direction vector for the axis. Let $\bK=\bQ(c_1,c_2,c_3,c_4)$. Let $Z_1,Z_2$ be the complex extensions of $\Y$ and $\T$ respectively, i.e. the sets of all complex solutions to the equations which define $\Y$ and $\T$ respectively. Let $M_i$ be the irreducible component of $Z_1\cap Z_2$ which contains $p(v_i)$ for $1\leq i \leq 5$. Each $M_i$ is a proper subvariety of $\Y$ so has dimension at most 1. Let $M$ be the direct product of $M_1,\dots, M_5$. Then $M$ is irreducible and $\dim M\leq 5$.
By Lemma \ref{lem:genericvariety} (c) it follows that $\td[\bK(p):\bK]\leq 5$. Thus $\td[\bK(p):\bQ]\leq 9$. However, since $p$ is generic and $\dim \Y^5=10$, we have $\td[\bQ(p):\bQ]=10$, a contradiction.
The case when the axis of $\T$ is contained within the $xy$-plane follows similarly by choosing the direction vector $(c_3,c_4,0)$ for the axis.

For the other cases, we use the following parametrisations and apply a similar argument. For $\M=\S$, choose the parametrisation of $\T$ with centre $(c_1,c_2,c_3)$. For $\M=\C$, choose the parametrisation of $\T$ with centre $(c_1,c_2,c_3)$ and a direction vector for the axis $(c_4,c_5,1)$, unless the axis is contained in the plane $z=0$, in which case take the axis $(c_4,c_5,0)$. For $\M=\E$, choose the parametrisation of $\T$ with the given radii having centre $(c_1,c_2,c_3)$ and a direction $(c_4,c_5,1)$ for the longest radius.
\end{proof}

\begin{lem}\label{lem:defnglobal}
Let $n\geq 4+\gamma$ where $\gamma=0$ if $\M=\S$, $\gamma=1$ if $\M=\Y$ and $\gamma = 2$ otherwise. Let $(K_n,p)$ and $(K_n,q)$ be equivalent frameworks on $\M$. Then there is an isometry $\iota$ of $\M$ such that $\iota(p)=q$.
\end{lem}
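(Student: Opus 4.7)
The plan is to lift the equivalence of $(K_n,p)$ and $(K_n,q)$ on $\M$ to an isometry of the ambient $\bR^3$, and then to apply Lemma \ref{lem:uniquec} to force this ambient isometry to preserve $\M$. Since $K_n$ contains every possible edge, equivalence of $(K_n,p)$ and $(K_n,q)$ amounts to the equality $\|p(v_i)-p(v_j)\|=\|q(v_i)-q(v_j)\|$ for every pair of vertices. The standard fact that two congruent finite point sets in $\bR^3$ differ by a Euclidean isometry then supplies a map $\phi\colon\bR^3\to\bR^3$ which is an isometry of $\bR^3$ and which satisfies $\phi\circ p=q$.

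Next I would set $\T=\phi(\M)$. Then $\T$ is a surface congruent to $\M$ (via $\phi$), and for each vertex $v_i$ we have $q(v_i)=\phi(p(v_i))\in\phi(\M)=\T$. Thus $\T$ is congruent to $\M$ and contains the $n\geq 4+\gamma$ points $q(v_1),\ldots,q(v_n)$, all of which also lie on $\M$. Choosing any subset of $4+\gamma$ vertices and noting that a sub-framework of a generic framework is again generic, Lemma \ref{lem:uniquec} applied to $(K_{4+\gamma},q)$ forces $\T=\M$. Hence $\phi$ carries $\M$ onto itself and the restriction $\iota:=\phi|_{\M}$ is an isometry of $\M$ with $\iota\circ p=q$, as required.

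The main obstacle is that Lemma \ref{lem:uniquec} requires its framework to be generic on $\M$, while the present statement contains no such hypothesis; this must be reconciled before the argument goes through. In the intended application in Section \ref{sec:global} at least one of $p,q$ is generic on $\M$ by construction, so the genericity hypothesis needed to invoke Lemma \ref{lem:uniquec} is available: if $q$ is generic the argument above applies directly, whereas if only $p$ is generic we can run the same argument with $\phi^{-1}$ in place of $\phi$, taking $\T=\phi^{-1}(\M)$ and applying Lemma \ref{lem:uniquec} to $(K_{4+\gamma},p)$ instead. Either way the ambient isometry descends to an isometry of $\M$ mapping $p$ to $q$.
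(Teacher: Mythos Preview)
Your argument is essentially identical to the paper's: obtain an ambient isometry $\phi$ of $\bR^3$ from congruence of the two point sets, set $\T=\phi(\M)$, and invoke Lemma~\ref{lem:uniquec} to conclude $\T=\M$, so that $\phi$ restricts to an isometry of $\M$.

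You have also spotted something the paper glosses over: Lemma~\ref{lem:uniquec} requires the framework to be generic, yet the statement of Lemma~\ref{lem:defnglobal} carries no genericity hypothesis. The paper's proof simply applies Lemma~\ref{lem:uniquec} to $(K_{4+\gamma},q)$ without comment, so the omission is in the lemma's statement rather than in your reasoning. Your remedy---observing that in the only application (Theorem~\ref{thm:redundant}) one of $p,q$ is quasi-generic, and swapping to $\phi^{-1}$ if it is $p$ rather than $q$---is the right fix and makes the argument rigorous where the paper is tacit.
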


\begin{proof}
It suffices to prove the case $n=4+\gamma$. Since $(K_{4+\gamma},p)$ and $(K_{4+\gamma},q)$ are congruent there is an isometry $\iota$ of $\bR^3$ which maps $(K_{4+\gamma},p)$ onto $(K_{4+\gamma},q)$. Then $\iota(\M)$ is a surface in $\bR^3$ which is congruent to $\M$. By Lemma \ref{lem:uniquec}, $\M$ is the unique surface of $\bR^3$ which contains  $(K_{4+\gamma},q)$. Hence $\iota(\M)=\M$ and $\iota$ is an isometry of $\M$.
\end{proof}

Note that, in the case where $\M=\E$, there are no continuous isometries so $\iota$ must be a discrete isometry.

\section{Global Rigidity}\label{sec:global}

We show that the known necessary conditions for global rigidity in $\bR^d$ given in Theorem \ref{thm:hennecessary} have natural analogues for generic frameworks on surfaces.
First we consider $k$-connectivity.

\begin{prop}\label{prop:kcon}
Let $G=(V,E)$ with $n\geq 4$. Let $(G,p)$ be a generic globally rigid framework on $\M$. Then $G$ is $k$-connected where $k=3$ if $\M=\S$, $k=2$ if $\M\in \{\Y,\C\}$ and $k=1$ if $\M=\E$.
\end{prop}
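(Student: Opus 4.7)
The plan is a contradiction argument imitating Hendrickson's reflection trick, but with isometries of $\M$ in place of isometries of $\bR^d$. Assume $G$ is not $k$-connected: choose a vertex cut $S$ with $|S|\leq k-1$, let $V_2$ be the vertex set of a smallest component of $G-S$, and set $V_1 = V\setminus(S\cup V_2)$, so $V_1, V_2$ are nonempty with no edge of $G$ between them. The aim is to find a non-trivial isometry $\iota$ of $\M$ fixing $p(v)$ for every $v\in S$, set $p'(v) = p(v)$ on $V_1\cup S$ and $p'(v) = \iota(p(v))$ on $V_2$, and show that $(G,p')$ is equivalent to $(G,p)$ on $\M$ but not congruent to it, contradicting global rigidity.

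Producing $\iota$ is a case check on the isometry groups of the four surfaces. On $\S$ (with $|S|\leq 2$) take the reflection in the plane through the origin and $p(S)$; on $\Y$ (with $|S|\leq 1$) take the reflection in the horizontal plane through $p(v)$, $v\in S$; on $\C$ (with $|S|\leq 1$) take the reflection in the plane containing the $z$-axis and $p(v)$; on $\E$ (with $S=\emptyset$) take a sign-flip such as $(x,y,z)\mapsto(-x,y,z)$. Each such $\iota$ is a non-trivial isometry of $\M$ fixing $p(S)$ pointwise, and since every edge of $G$ has both endpoints in $V_1\cup S$ or both in $V_2\cup S$, it is immediate that $(G,p)$ and $(G,p')$ are equivalent frameworks on $\M$.

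The substantive step is to rule out congruence. Assuming for contradiction that $(G,p)$ and $(G,p')$ are congruent, the complete frameworks $(K_n,p)$ and $(K_n,p')$ on $\M$ are equivalent; hence, provided $n\geq 4+\gamma$, Lemma \ref{lem:defnglobal} produces an isometry $\psi$ of $\M$ with $\psi\circ p = p'$, and $\psi$ necessarily fixes every point of the generic set $p(V_1\cup S)$. A short count using $|V_1|\geq\lceil(n-|S|)/2\rceil$, $|S|\leq k-1$ and $n\geq 4$ delivers $|V_1\cup S|\geq k$. The choice of $\iota$ already establishes that $k-1$ generic points of $\M$ admit a non-trivial stabiliser in $\mathrm{Isom}(\M)$; the complementary fact, that the joint stabiliser of any $k$ generic points of $\M$ is trivial, is a direct check on the explicit isometry groups of $\S, \Y, \C, \E$. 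Hence $\psi = \mathrm{id}_\M$, forcing $p(v) = \iota(p(v))$ for every $v\in V_2$, contradicting that the $\M$-fixed locus of $\iota$ is a proper subvariety avoiding generic points.

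The main obstacle I anticipate is the small-$n$ regime where Lemma \ref{lem:defnglobal} does not apply (namely $n<4+\gamma$), and where the size inequality $|V_1\cup S|\geq k$ may also be tight. These are finitely many exceptional $(n,\M)$ pairs and I would dispose of them by direct inspection: in each such case the edge-count requirements of Theorem \ref{thm:cylinderlaman} and Lemma \ref{thm:ellipsoidmaxwell} for rigidity on $\M$ force any rigid $G$ to be a complete graph on $n$ vertices, which is automatically $k$-connected, so the proposition holds vacuously.
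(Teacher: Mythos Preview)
Your reflection construction matches the paper's. Where you diverge is in proving non-congruence: the paper simply asserts it, and the intended justification is much lighter than yours. Pick $u\in V_1$ and $w\in V_2$; the reflecting plane $\Pi$ is determined by the centre or axis of $\M$ together with $p(S)$, so membership of a point of $\M$ in $\Pi$ is a nontrivial polynomial condition with coefficients in $\bQ(p(S))$, and genericity of $p$ forces $p(u),p(w)\notin\Pi$. Hence $\|p(u)-p(w)\|\ne\|p(u)-\iota(p(w))\|$, and $(G,p)$, $(G,p')$ are already non-congruent. This argument works uniformly for all $n\ge 4$ and avoids both the appeal to Lemma~\ref{lem:defnglobal} and the small-$n$ case split.

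Your heavier route via Lemma~\ref{lem:defnglobal} is correct in outline, but watch two slips. The count ``$|V_1\cup S|\ge k$'' fails as stated when $\M=\S$, $n=4$ and $S=\emptyset$ (where $|V_1|$ can equal $2$); you need the side remark that a disconnected $G$ is never rigid on $\S$, so that case does not arise. And in your small-$n$ endgame, rigidity does not force $G$ to be complete: on the cone with $n=5$, for instance, $K_5-e$ is isostatic by Theorem~\ref{thm:cylinderlaman}. The edge count does, however, yield $k$-connectivity directly, which is all you need.
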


\begin{proof}
Assume $G$ is not $k$-connected. Then we have $G=G_1\cup G_2$ for subgraphs $G_i=(V_i,E_i)$ with $|V_1\cap V_2|\leq k-1$.
Let $p_1=p|_{V_1}$ and $p_2=p|_{V_2}$. Let $(G,q)$ be obtained from $(G,p)$ by reflecting $(G_2,p_2)$ in a plane which contains
$p(V_1\cap V_2)$ and also contains:
the origin when $\M=\S$;
the $z$-axis when $\M\in\{\Y,\C\}$;
the $y$- and $z$-axes when $\M=\E$.
Then $(G,q)$ is a framework on $\M$ and is equivalent but not congruent to $(G,p)$.
\end{proof}

We next consider redundant rigidity.

\begin{thm}\label{thm:redundant}
Suppose $(G,p_0)$ is quasi-generic and globally rigid on $\M$ and $n\geq 4+\gamma$ where $\gamma=0$ if $\M=\S$, $\gamma=1$ if $\M=\Y$ and $\gamma=2$ otherwise. Then $(G,p_0)$ is redundantly rigid on $\M$.
\end{thm}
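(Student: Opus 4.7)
I would proceed by contradiction, following the Hendrickson-style strategy outlined in the introduction. Suppose $(G,p_0)$ is globally rigid but not redundantly rigid on $\M$, so there is an edge $e=v_jv_k$ with $(G-e,p_0)$ flexible on $\M$; global rigidity implies $(G,p_0)$ is itself rigid. After applying an isometry of $\M$, I may assume $(G,p_0)$ is in standard position. By Theorem~\ref{nop}, $(G,p_0)$ is infinitesimally rigid on $\M$, giving $\rank df_{[G]}|_{p_0}=2n-\ell$; flexibility of $(G-e,p_0)$ forces $\rank df_{[G-e]}|_{p_0}=2n-\ell-1$, and Lemma~\ref{lem:smoothmanifold} shows that the fiber $f_{[G-e]}^{-1}(f_{[G-e]}(p_0))$ is a smooth $1$-manifold. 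Let $M$ be the connected component containing $p_0$.

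Next I would identify $M$ with $S^1$. For $\M\in\{\S,\E\}$ this follows from compactness of $\N$; for $\M\in\{\Y,\C\}$, Proposition~\ref{prop:kcon} ensures $G-e$ is connected (since $G$ is at least $2$-connected), and the standard-position constraints combined with the fixed bar lengths of $G-e$ should confine the vertex positions to a compact subset of $\M^n$, again giving compactness of $M$. A compact connected $1$-manifold is diffeomorphic to $S^1$. Define the smooth length function $\phi\colon M\to\bR$ by $\phi(p)=\|p(v_j)-p(v_k)\|^2$. The row of $e$ in the rigidity matrix is linearly independent of the rows of $df_{[G-e]}|_{p_0}$, so $d\phi|_{p_0}$ does not vanish on $T_{p_0}M=\ker df_{[G-e]}|_{p_0}$; hence $p_0$ is a regular point of $\phi|_M$. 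Also $\phi$ is non-constant on $M$, since constancy would give a non-trivial flex of $(G,p_0)$ contradicting rigidity. Since $\phi$ is continuous, non-constant, and $\phi(p_0)$ lies strictly between its minimum and maximum on $S^1$, the intermediate value theorem produces a second point $p_1\in M\setminus\{p_0\}$ with $\phi(p_1)=\phi(p_0)$; then $(G,p_1)$ is equivalent to $(G,p_0)$ on $\M$.

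It remains to show that $(G,p_0)$ and $(G,p_1)$ are not $\M$-congruent, so that Lemma~\ref{lem:defnglobal} (applicable since $n\geq 4+\gamma$) yields that they are not congruent in $\bR^3$, contradicting global rigidity. Because both lie in $\N$, any isometry of $\M$ carrying $p_0$ to $p_1$ must lie in the finite group $\Gamma$ of isometries of $\M$ preserving standard position; thus $\M$-congruence forces $p_1\in T:=\{\iota(p_0):\iota\in\Gamma\}$, a finite set whose points have coordinates algebraic over $\bQ(p_0)$. I would argue that quasi-genericity of $p_0$ (via Lemma~\ref{lem:quasi=edgegen}, giving $\td[\bQ(f_G(p_0)):\bQ]=2n-\ell$) prevents the finite level set $M\cap \phi^{-1}(\phi(p_0))$ from being entirely contained in $T$: if it were, the equalities $p_i=\iota_i(p_0)$ would impose a non-trivial polynomial relation with rational coefficients on the entries of $f_G(p_0)$ that is incompatible with that transcendence degree. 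Choosing $p_1\notin T$ then completes the contradiction.

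The main obstacle I expect is the final step: showing the finite level set of $\phi$ on the motion circle is not wholly contained in the discrete orbit $T$. Although both sets are finite, the scenario in which every equivalent realisation happens to equal $\iota(p_0)$ for some $\iota\in\Gamma$ cannot be excluded by dimension counting alone, and ruling it out requires a careful algebraic argument built from quasi-genericity. A secondary technical point is verifying compactness of $M$ in the non-compact surface cases $\Y$ and $\C$, where the standard-position constraints alone do not fully pin down $v_1$.
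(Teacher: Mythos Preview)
Your argument matches the paper's through the construction of the motion circle and the existence of a second point $p_1$ on it with $\phi(p_1)=\phi(p_0)$; the compactness issue you flag for $\C$ is real and the paper handles closure there by a transcendence-degree count showing limit points cannot hit the apex. The genuine gap is precisely where you locate it: your claim that containment of the finite level set $M\cap\phi^{-1}(\phi(p_0))$ in the orbit $T=\{\iota(p_0):\iota\in\Gamma\}$ would ``impose a non-trivial polynomial relation with rational coefficients on the entries of $f_G(p_0)$'' is not justified, and it is not clear how to extract such a relation. The coincidence of two particular finite subsets of $\N$ for one specific $p_0$ does not by itself yield an algebraic identity among the squared edge-lengths, and no dimension or degree count rules it out.

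The paper closes this gap by a topological argument on the circle rather than by algebra on $T$. One first chooses $p_1$ to be the \emph{nearest} level point to $p_0$ in a fixed orientation of the circle $\O$. If $(G,p_0)$ and $(G,p_1)$ are congruent, Lemma~\ref{lem:defnglobal} gives a discrete isometry $\iota$ of $\M$ with $\iota(p_0)=p_1$; in every case $\iota$ is an involution, so $\iota$ acts on $\O$ sending the arc $\alpha$ from $p_0$ to $p_1$ to an arc $\iota\circ\alpha$ from $p_1$ back to $p_0$. If these two arcs have different images they cover $\O$, and comparing the sign of $d\phi$ at $p_0$ and at $p_1=\iota(p_0)$ forces a third level point strictly inside $\alpha$, contradicting the minimal choice of $p_1$. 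If they have the same image they are traversed in opposite directions, so the intermediate value theorem produces a fixed point $p_t=\iota(p_t)$ on $\O$. But any $\iota$-fixed configuration lies in a coordinate hyperplane section of $\M^n$, forcing $\td[\bQ(p_t):\bQ]$ to be at most roughly $n$, while $f_{G-e}(p_t)=f_{G-e}(p_0)$ forces $\td[\bQ(p_t):\bQ]\geq 2n-\ell-1$; this is the contradiction. The missing idea in your proposal is thus to let $\iota$ act on the motion circle and manufacture a fixed point with degenerate coordinates, rather than to reason about the orbit $T$ directly.
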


\begin{proof}
Without loss of generality we may assume that $(G,p_0)$ is in standard position.
Since $(G,p_0)$ is globally rigid, it is rigid. Suppose, for a contradiction, that $(G,p_0)$ is not redundantly rigid.
Let $G=(V,E)$ and choose $e\in E$ such that $(G-e,p_0)$ is not rigid.
Since $n\geq 4+\gamma$, we have $\rank df_{[G]}(p)=2n-\ell$ and $\rank df_{[G-e]}(p)=2n-\ell-1$. Hence, by Lemma \ref{lem:smoothmanifold}, $\P=f_{[G-e]}^{-1}(f_{[G-e]}(p_0))$ is a one dimensional submanifold of $\N$.

\begin{claim}\label{c1}
$\P$ is a closed and bounded subset of $\bR^{3n}$.
\end{claim}
\begin{proof}
We first verify closure. Let $p_1,p_2,\ldots$ be a sequence of points in $\P$ which converge to a limit point $p\in \bR^{3n}$.
Then $(G-e,p_1),(G-e,p_2),\ldots$ is a sequence of frameworks in standard position on $\M$ which are equivalent to $(G-e,p_0)$ and converge to a limit
$(G-e,p)$. It is easy to see that $(G-e,p)$ will be in standard position on $\M$ and equivalent to $(G-e,p_0)$. This is enough to imply that $p\in \P$ when
$\M\neq \C$. When $\M= \C$ we must also verify that $p(v)\neq (0,0,0)$ for all $v\in V$. In this case we have $\td[\bQ(f_G(p_0)):\bQ]\geq 2n-1$ by Lemma \ref{lem:gen->edgegen}. Since $f_{G-e}(p)=f_{G-e}(p_0)$ and  $\bQ(f_{G-e}(p))\subseteq \bQ(p)$, this gives
$\td[\bQ(p):\bQ]\geq 2n-2$. If $p(v)= (0,0,0)$ for some $v\in V\setminus \{v_1\}$, then we would also have $\td[\bQ(p):\bQ]\leq 2n-3$, which is impossible.

We next verify boundedness. When $\M=\E$, this follows from the fact $\P\subset \E^n$ and $\E^n$ is bounded. When $\M\neq \E$, it follows from the facts that  $G-e$ is connected by Proposition \ref{prop:kcon} and that $(G-e,q)$ is in standard position on $\M$ and equivalent to $(G-e,p_0)$ for all
$q\in \P$.
\end{proof}

Let  $\O$ be the component of $\P$ which contains $p_0$. Claim \ref{c1} and the classification of one dimensional manifolds tells us
that $\O$ is diffeomorphic to a circle.
For any $q\in \N$, we consider the framework $(G,q)$ and define the map $f_e:\N\rightarrow\bR$ by
$f_e(q)=\|q(u)-q(v)\|^2$, where $e=uv$. 
Then $f_{[G]}=(f_e,f_{[G-e]})$. Let $f_{[e]}=f_e|_\P$.

By Lemma \ref{lem:regval}, $f_{[G-e]}(p_0)$ is a regular value of $f_{[G-e]}$. Since $\P=f_{[G-e]}^{-1}(f_{[G-e]}(p_0))$, Lemma \ref{lem:JK2011} gives
$$\rank df_{[e]}|_{p_0}=\rank df_{[G]}|_{p_0}-\rank df_{[G-e]}|_{p_0}=2n-\ell-(2n-(\ell+1))=1.$$
Hence $p_0$ is not a critical point of $f_{[e]}$, and there exists $q_1,q_2\in \O$ with $f_e(q_1)<f_e(p_0)<f_e(q_2)$. There are two paths in $\O$ between $q_1$ and $q_2$, so by the Intermediate Value Theorem there exists a $p_1\in \O$ with $p_1\neq p_0$ and $f_e(p_1)=f_e(p_0)$.  Then $(G,p_o)$ is equivalent to $(G,p_1)$. 
We may assign an orientation to $\O$ and suppose that $p_1$ has been chosen such that  $p_1$ is as close to $p_0$ as possible when we traverse $\O$ in the forward direction.
If $(G,p_0)$ is not congruent to $(G,p_1)$ we are done so we may assume that $(G,p_0)$ is congruent to $(G,p_1)$. Then Lemma \ref{lem:defnglobal} implies there is an isometry $\iota$ of $\M$ such that $\iota(p_0)=p_1$. Since $(G,p_0)$ and $(G,p_1)$ are both in standard position with respect to $v_1$, $\iota$ is a discrete isometry of $\M$; i.e. $\iota$ is a composition of reflections of $\M$ in some planes.
Given any point $p\in \O$, let $p^{-1}=\iota(p)$. Given any path $\alpha:[0,1]\rightarrow \O$ in $\O$ from $p_0$ to $p_1$, 
define the path $\alpha^{-1}:[0,1]\rightarrow \O$ by $\alpha^{-1}(t)=\alpha(t)^{-1}$. Then $\alpha^{-1}$ is a path in $\O$ from $p_1$ to $p_0$.

First suppose that $\alpha$ and $\alpha^{-1}$ have different images in $\O$. Then $\alpha$ and $\alpha^{-1}$ cover $\O$. Without loss of generality suppose that $f_e$ increases as we pass through $p_0$ in the forward direction. Then $f_e$ also increases as we pass through $p_1$ in the forward direction. Hence there exists $t_1,t_2$ with $0<t_1<t_2<1$ such that $f_e(p_{t_1})>f_e(p_0)$ and $f_e(p_{t_2})< f_e(p_1)=f_e(p_0)$. By the Intermediate Value Theorem, there exists $t_3\in [t_1,t_2]$ with $f_e(p_{t_3})=f_e(p_0)$. Then $(G,p_{t_3})$ is equivalent to $(G,p_0)$ and $p_{t_3}$ contradicts the choice of $p_1$.

Now suppose $\alpha$ and $\alpha^{-1}$ have the same image in $\O$. Then $\alpha$ and $\alpha^{-1}$ traverse the same segment of $\O$ in opposite directions. Call the direction from $p_0$ to $p_1$ forward. By the Intermediate Value Theorem there exists $t\in [0,1]$ such that $\alpha(t)=\alpha^{-1}(t)$. Putting $\alpha(t)=p_t$ we have $p_t^{-1}=p_t$. We will show that this contradicts the fact that $(G,p_0)$ is quasi-generic.
Consider the following cases.

\smallskip
\noindent
{\bf Case 1}: $\M=\S$. Then $\iota$ is the unique reflection in the plane $x=0$. Thus, for any realisation $(G,p)$, if $p(v_i)=(x_i,y_i,z_i)$ we have $p^{-1}(v_i)=(-x_i,y_i,z_i)$. Since $p_t(v_i)=p_t^{-1}(v_i)$ we have $p_t(v_i)=(0,y_i,z_i)$ for all $v_i \in V$. Since $p_t(v_1)=(0,1,0)$ and $x_i^2+y_i^2+z_i^2=1$ for all $2\leq i \leq n$ this gives $\td[\bQ(p_t):\bQ]\leq n-1$. But $f_{G-e}(p_t)=f_{G-e}(p_0)$ and $\td[\bQ(f_G(p_0)):\bQ]=2n-3$, hence $\td[\bQ(f_{[G-e]}(p_t)):\bQ]=2n-4$, which gives a contradiction.

\smallskip
\noindent
{\bf Case 2}: $\M=\Y$. Then $\iota$ is a composition of reflections in the plane $z=0$ and the plane through $(0,1,0)$ and the $z$-axis.

We first consider the subcase where $\iota$ is the reflection in the $z=0$ plane. Then, for any realisation $(G,p)$, if $p(v_i)=(x_i,y_i,z_i)$ we have $p^{-1}(v_i)=(x_i,y_i,-z_i)$. Since $p_t(v_i)=p_t^{-1}(v_i)$ we have $p_t(v_i)=(x_i,y_i,0)$ for all $v_i \in V$. Since $p_t(v_1)=(0,1,0)$ and $x_i^2+y_i^2=1$ for all $2\leq i \leq n$ this gives $\td[\bQ(p_t):\bQ]\leq n-1$. But $f_{G-e}(p_t)=f_{G-e}(p_0)$ and $\td[\bQ(f_G(p_0)):\bQ]=2n-2$, hence $\td[\bQ(f_{[G-e]}(p_t)):\bQ]=2n-3$, which gives a contradiction.

Now consider the subcase where $\iota$ is the reflection in the plane through $(0,1,0)$ and the $z$-axis. Then, for any realisation $(G,p)$, if $p(v_i)=(x_i,y_i,z_i)$ we have $p^{-1}(v_i)=(-x_i,y_i,z_i)$. Since $p_t(v_i)=p_t^{-1}(v_i)$ we have $p_t(v_i)=(0,y_i,z_i)$ for all $v_i \in V$. As before we have $\td[\bQ(p_t):\bQ]\leq n-1$. But $\td[\bQ(f_{[G-e]}(p_t)):\bQ]=2n-3$, which gives a contradiction.

Finally consider the subcase where $\iota$ is the composition of the reflection in the $z=0$ plane and in the plane through $(0,1,0)$ and the $z$-axis.
Recall that reflections generate a group, in this case $\bZ_2 \times \bZ_2$, so this is indeed the last case. Then, for any realisation $(G,p)$, if $p(v_i)=(x_i,y_i,z_i)$ we have $p^{-1}(v_i)=(-x_i,y_i,-z_i)$. Since $p_t(v_i)=p_t^{-1}(v_i)$ we have $p_t(v_i)=(0,\pm 1,0)$ for all $v_i \in V$. Here $\td[\bQ(p_t):\bQ]= 0$. But $\td[\bQ(f_{[G-e]}(p_t)):\bQ]=2n-3$, which gives a contradiction.

\smallskip
\noindent
{\bf Case 3}: $\M=\C$. Then $\iota$ is the reflection in the plane through $(0,y,y^2)$ and the $z$-axis. Then, for any realisation $(G,p)$, if $p(v_i)=(x_i,y_i,z_i)$ we have $p^{-1}(v_i)=(-x_i,y_i,z_i)$. Since $p_t(v_i)=p_t^{-1}(v_i)$ we have $p_t(v_i)=(0,y_i,z_i)$ for all $v_i \in V$. We have $\td[\bQ(p_t):\bQ]\leq n-1$. But $\td[\bQ(f_{[G-e]}(p_t)):\bQ]=2n-2$, which gives a contradiction.

\smallskip
\noindent
{\bf Case 4}: $\M=\E$. Then $\iota$ is a composition of reflections in the plane $x=0$, the plane $y=0$ and the plane $z=0$.
First consider the subcase when $\iota$ is the reflection in the plane $x=0$. Then, for any realisation $(G,p)$, if $p(v_i)=(x_i,y_i,z_i)$, we have $p^{-1}(v_i)=(-x_i,y_i,z_i)$. Since $p_t(v_i)=p_t^{-1}(v_i)$ we have $p_t(v_i)=(0,y_i,z_i)$ for all $v_i \in V$. We have $\td[\bQ(p_t):\bQ]\leq n$. But $\td[\bQ(f_{[G-e]}(p_t)):\bQ]=2n-1$, which gives a contradiction.
The other subcases follow similarly.
\end{proof}

The necessary conditions for global rigidity given in Proposition \ref{prop:kcon} and Theorem  \ref{thm:redundant} are independent since, for each $\M\in \{\S,\Y,\C,\E\}$, there are examples of  generic frameworks $(G,p)$ on $\M$ such that $G$ is $k$-connected for $k$ as in Proposition \ref{prop:kcon}, but $(G,p)$ is not redundantly rigid, and examples such that $(G,p)$ is redundantly rigid but not $k$-connected.


\section{Concluding Remarks}

\noindent1. Theorem \ref{thm:globalplane} and the result of Connelly and Whiteley \cite{C&W} that generic global rigidity in $\bR^2$ and the sphere are equivalent, shows
that the necessary conditions for generic global rigidity on the sphere given in Proposition \ref{prop:kcon} and Theorem  \ref{thm:redundant}
are sufficient. We conjecture that they are also sufficient for the cylinder and the cone.

\begin{con}\label{char}
Let $(G,p)$ be a generic framework on $\M$ where $\M\in \{\Y,\C\}$. Then $(G,p)$ is globally rigid on $\M$ if and only if either $G$ is a complete graph on at most four vertices or $G$ is $2$-connected and $(G,p)$ is redundantly rigid.
\end{con}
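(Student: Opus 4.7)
The conjecture is an iff statement, and it splits into a \emph{necessary} direction and a \emph{sufficient} direction.

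The necessary direction already follows from the present paper. If $|V|\geq 4$, Proposition \ref{prop:kcon} gives $2$-connectivity, and if $|V|\geq 4+\gamma$ Theorem \ref{thm:redundant} gives redundant rigidity. The remaining small-vertex cases (essentially $|V|\leq 4$ on $\Y$ and $|V|\leq 5$ on $\C$) form a finite list of $2$-connected graphs; using the sparsity counts of Theorem \ref{thm:cylinderlaman} to rule out the non-rigid candidates, each remaining case is either a $K_n$ covered by the stated exception, or a graph for which global rigidity can be checked directly by the stress-matrix method described in Step~1 below.

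The substantive content is the sufficient direction, and I would attack it by mirroring the three-step strategy of Jackson--Jord\'an for Theorem \ref{thm:globalplane}.

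\textbf{Step 1 (geometric sufficient condition).} Establish an $\M$-analogue of Connelly's stress-matrix criterion: if a quasi-generic framework $(G,p)$ on $\M\in\{\Y,\C\}$ is infinitesimally rigid on $\M$ and admits an equilibrium stress $\omega$ whose associated stress matrix $\Omega(\omega)$ has the maximal corank consistent with the surface constraints (namely $3+\ell$), then $(G,p)$ is globally rigid on $\M$. The proof should transplant Connelly's averaging argument: given an equivalent framework $(G,q)$, the midpoint configuration $\tfrac12(p+q)$ inherits $\omega$ as an equilibrium stress, and the maximal-corank property together with the surface equation $h\circ p_i=h\circ q_i=0$ should force $q$ to lie in an $\M$-isometry orbit of $p$.

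\textbf{Step 2 (combinatorial inductive construction).} Show that every graph $G$ satisfying the hypotheses of the conjecture can be obtained from $K_4$ by a sequence of moves within the class of $2$-connected redundantly rigid graphs satisfying the $(2,\ell)$-count of Theorem \ref{thm:cylinderlaman}. The natural candidates are edge addition and the $1$-extension (delete an edge $uv$, introduce a new vertex $w$ adjacent to $u,v$ and one further vertex), together possibly with a vertex-split move. This should follow from a matroidal analysis of $(2,\ell)$-sparsity along the lines of the tree-decomposition argument used for the plane. \textbf{Step 3 (preservation under moves).} Verify that each such move preserves generic global rigidity on $\M$: edge addition is immediate, while for $1$-extension one must construct an equilibrium stress of maximal corank on the extended framework by a transportation-and-limit argument, making sure all displacements remain tangent to $\M$.

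The main obstacle I expect is Step~1. Connelly's planar proof leans on the fact that $\mathrm{SE}(2)$ acts freely and transitively on generic configurations of sufficient size, so the quotient by congruence is smooth of the expected dimension and the averaging step is clean. On $\Y$ and $\C$ the isometry group has dimension only $\ell\in\{1,2\}$, the standard-position slice of Section \ref{sec:quasigen} has only a discrete residual symmetry group, and the ``symmetric'' loci that required separate attention in the Case analysis of Theorem \ref{thm:redundant} reappear here as precisely the points at which $\tfrac12(p+q)$ can fail to be generic on $\M$. Ruling them out should require grafting the quasi-generic transcendence-degree machinery of Section \ref{sec:quasigen} onto the averaging argument, case-by-case for the two surfaces. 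Step~3 is also non-trivial since a clean $1$-extension theorem for generic global rigidity on surfaces is a new ingredient, but it should be approachable by adapting the Henneberg-style proofs known in the plane.
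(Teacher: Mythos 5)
This statement is labelled a \emph{conjecture} in the paper, and the authors explicitly leave it open: the paper proves only the necessary direction (Proposition~\ref{prop:kcon} and Theorem~\ref{thm:redundant}) and then states Conjecture~\ref{char}, remarking that sufficiency on $\Y$ and $\C$ is unresolved. So there is no proof in the paper against which to compare, and your proposal should be judged as a research programme, not a proof.

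As a programme it is pointed in the right direction but has genuine gaps, which you yourself partly flag. The central one is Step~1. You propose a Connelly-type stress-matrix criterion and suggest verifying it by an averaging argument using the midpoint configuration $\tfrac12(p+q)$; but this configuration does not lie on $\M$ (the constraint $h=0$ is quadratic, not affine), so the assertion that it ``inherits $\omega$ as an equilibrium stress'' subject to the surface equations is false as written, and the whole averaging step has to be recast (e.g.\ by passing to the $\M$-rigidity map $F_G$ and keeping track of the coefficients on the $\theta_G$-rows of a stress, or by a Gortler--Healy--Thurston style degree argument on the quotient by the $\ell$-dimensional isometry group). Relatedly, your stated target corank $3+\ell$ is not pinned down by any computation here, and comparison with the sphere (where $\ell=3$ and Connelly--Whiteley reduce to the planar corank $d+1=3$) suggests it is not the right number. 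Steps~2 and~3 are also substantive open problems: a Henneberg-type recursion preserving $2$-connectivity, redundant rigidity and the $(2,\ell)$-sparsity count on these surfaces is not established by Theorem~\ref{thm:cylinderlaman}, and a $1$-extension theorem for generic global rigidity on $\M$ (your ``transportation-and-limit argument'') would itself be a new theorem requiring care with tangency to $\M$ and with the discrete symmetries that already forced the case analysis in the proof of Theorem~\ref{thm:redundant}. In short, the outline mirrors the correct template (Connelly plus Jackson--Jord\'an), but each of the three steps is an unproved ingredient, so this does not constitute a proof of the conjecture.
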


We know of no examples for which the necessary conditions  given in Proposition \ref{prop:kcon} and Theorem  \ref{thm:redundant}
fail to be sufficient to imply generic global rigidity on the ellipsoid. On the other hand
we do not even know how to  characterise generic rigidity  for the ellipsoid.

\smallskip
\noindent2.
If true, Conjecture \ref{char} would give a  polynomial algorithm to check generic global rigidity on the cylinder and cone:
redundant rigidity can be checked  in polynomial time, see for example \cite{L&S} or \cite{B&J2}, as can $2$-connectivity, see \cite{Tarjan}.

\smallskip
\noindent3. The methods in this paper can be adapted to prove analogues of Theorem \ref{thm:redundant} for a number of other surfaces including
tori, elliptical cylinders and elliptical cones. It is conceivable that the methods will apply to any irreducible $2$-dimensional algebraic variety embedded in $\bR^3$.

\smallskip
\noindent \textbf{Acknowledgements.} We would like to thank the School of Mathematics, University of Bristol for supporting the first author's visits during which most of this research took place.
We also thank Edward Crane for helpful conversations concerning generic points on algebraic varieties.

\vspace{-.5cm}

\bibliographystyle{abbrv}
\bibliography{JMNarxiv}

\begin{thebibliography}{10}

\bibitem{SensorNetworks}
B.~D. Anderson, P.~N. Belhumeur, T.~Eren, D.~K. Goldenberg, A.~S. Morse,
  W.~Whiteley, and Y.~R. Yang.
\newblock Graphical properties of easily localizable sensor networks.
\newblock {\em Wirel. Netw.}, 15(2):177--191, Feb. 2009.

\bibitem{B&J2}
A.~R. Berg and T.~Jord{\'a}n.
\newblock Algorithms for graph rigidity and scene analysis.
\newblock In {\em Algorithms---{ESA} 2003}, volume 2832 of {\em Lecture Notes
  in Comput. Sci.}, pages 78--89. Springer, Berlin, 2003.

\bibitem{Con}
R.~Connelly.
\newblock Generic global rigidity.
\newblock {\em Discrete Comput. Geom.}, 33(4):549--563, 2005.

\bibitem{C&W}
R.~Connelly and W.~J. Whiteley.
\newblock Global rigidity: the effect of coning.
\newblock {\em Discrete Comput. Geom.}, 43(4):717--735, 2010.

\bibitem{Har}
R.~Hartshorne.
\newblock {\em Algebraic geometry}.
\newblock Springer-Verlag, New York, 1977.
\newblock Graduate Texts in Mathematics, No. 52.

\bibitem{Hdk}
B.~Hendrickson.
\newblock Conditions for unique graph realizations.
\newblock {\em SIAM J. Comput.}, 21(1):65--84, 1992.

\bibitem{J&J}
B.~Jackson and T.~Jord{\'a}n.
\newblock Connected rigidity matroids and unique realizations of graphs.
\newblock {\em J. Combin. Theory Ser. B}, 94(1):1--29, 2005.

\bibitem{J&K}
B.~Jackson and P.~Keevash.
\newblock Necessary conditions for the global rigidity of direction-length
  frameworks.
\newblock {\em Discrete Comput. Geom.}, 46(1):72--85, 2011.

\bibitem{Lam}
G.~Laman.
\newblock On graphs and rigidity of plane skeletal structures.
\newblock {\em J. Engrg. Math.}, 4:331--340, 1970.

\bibitem{L&S}
A.~Lee and I.~Streinu.
\newblock Pebble game algorithms and sparse graphs.
\newblock {\em Discrete Math.}, 308(8):1425--1437, 2008.

\bibitem{Mil}
J.~W. Milnor.
\newblock {\em Topology from the differentiable viewpoint}.
\newblock Based on notes by David W. Weaver. The University Press of Virginia,
  Charlottesville, Va., 1965.

\bibitem{NOP2}
A.~Nixon, J.~Owen, and S.~Power.
\newblock A {L}aman theorem for frameworks on surfaces of revolution.
\newblock \url{http://arxiv.org/abs/1210.7073}, submitted 2013.

\bibitem{NOP}
A.~Nixon, J.~C. Owen, and S.~C. Power.
\newblock Rigidity of {F}rameworks {S}upported on {S}urfaces.
\newblock {\em SIAM J. Discrete Math.}, 26(4):1733--1757, 2012.

\bibitem{Sei}
A.~Seidenberg.
\newblock A new decision method for elementary algebra.
\newblock {\em Ann. of Math. (2)}, 60:365--374, 1954.

\bibitem{Tarjan}
R.~Tarjan.
\newblock Depth-first search and linear graph algorithms.
\newblock {\em SIAM J. Comput.}, 1(2):146--160, 1972.

\bibitem{Tar}
A.~Tarski.
\newblock {\em A {D}ecision {M}ethod for {E}lementary {A}lgebra and
  {G}eometry}.
\newblock RAND Corporation, Santa Monica, Calif., 1948.

\bibitem{Robotics}
D.~Zelazo, A.~Franchi, F.~Allgower, H.~Bulthoff, and P.~R. Giordano.
\newblock Rigidity maintenance control for multi-robot systems.
\newblock In {\em Proceedings of Robotics: Science and Systems}, Sydney,
  Australia, July 2012.

\end{thebibliography}

\end{document}